\definecolor{mycolor1}{rgb}{0.00000,0.44700,0.74100}%
\pgfplotsset{compat=newest,       
	label shift=-.03*\w,
	tick label style={font=\scriptsize},
	label style={font=\tiny},
	legend style={font=\footnotesize},
	every x tick label/.append style={alias=XTick,inner xsep=0pt},
	every x tick scale label/.style={at=(XTick.south east),anchor=base west},
	tick scale binop=\times,
}
\numberwithin{equation}{section}
\newtheorem{theorem}{Theorem}[section]
\newtheorem{lemma}{Lemma}[section]
\newtheorem{remark}{Remark}[section]
\newcommand{\bu}{{ u}}
\newcommand{\bv}{{v}}
\newcommand{\bw}{{ w}}
\newcommand{\bvo}{{ v_1}}
\newcommand{\but}{\bu(\cdot,t)}
\newcommand{\bvt}{\bv(\cdot,t)}
\newcommand{\bwt}{\bw(\cdot,t)}
\newcommand{\bus}{\bu(\cdot,s)}
\newcommand{\bvs}{\bv(\cdot,s)}
\newcommand{\bws}{\bw(\cdot,s)}
\newcommand{\gu}{{\nabla \bu}}
\newcommand{\gv}{{\nabla \bv}}
\newcommand{\gvo}{{\nabla \bvo}}
\newcommand{\gw}{{\nabla \bw}}
\newcommand{\gvs}{{\nabla \bvs}}
\newcommand{\lv}{{\Delta \bv}}
\newcommand{\lu}{{\Delta \bu}}
\newcommand{\lw}{{\Delta \bw}}
\newcommand{\mgu}{{|\gu|}}
\newcommand{\mgvo}{{|\gvo|}}
\newcommand{\rsn}{\mathbb{R}^n}
\newcommand{\lis}{{ L^{\infty}}(\Omega)}
\newcommand{\los}{{L^{1}}(\Omega)}
\newcommand{\lts}{{L^{2}}(\Omega)}
\newcommand{\ltps}{{ L^{\frac{2}{p}}}(\Omega)}
\newcommand{\lps}{{ L^{p}}(\Omega)}
\newcommand{\lrs}{{ L^{q_0}}(\Omega)}
\newcommand{\wsq}{{ W^{1,q}}(\Omega)}
\newcommand{\cso}{ {C^{0}(\overline{\Omega})}}
\newcommand{\lros}{{L^{q_0\zeta }(\Omega)}}
\newcommand{\nrv}{\big\|\bv\big\|}
\newcommand{\nrw}{\big\|\bw\big\|}
\newcommand{\nut}{\big\|\but\big\|}
\newcommand{\nvt}{\big\|\bvt\big\|}
\newcommand{\nwt}{\big\|\bwt\big\|}
\newcommand{\ngvt}{\|\nabla\bvt\|}
\newcommand{\ssup}{\sup\limits_{t\in(0,\tmax)}}
\newcommand{\tmax}{T_{\mathrm{max}}}
\newcommand{\tin}{t_0}
\newcommand{\into}{\int_{\Omega}}
\newcommand{\intt}{\int^t_0}
\newcommand{\intti}{\int^t_{\tin}}
\newcommand{\inti}{\int_0^\infty}
\newcommand{\ints}{\int_{\Omega}}
\newcommand{\ds}{\mathrm{d}s}
\newcommand{\dt}{\frac{\mathrm{d}}{\mathrm{d}t}}
\newcommand{\phvo}{\phi(\bvo)}
\newcommand{\povo}{\phi^{'}(\bvo)}
\newcommand{\ptvo}{\phi^{''}(\bvo)}
\newcommand{\bup}{\bu^p}
\newcommand{\upo}{\bu^{p-1}}
\newcommand{\intbo}{\int_{\partial\Omega}}
\newcommand{\absgu}{\lvert \gu \rvert}
\newcommand{\absgv}{\lvert \gv \rvert}
\newcommand{\absgw}{\lvert \gw \rvert}
\title[Tumor-immune cell interactions chemotaxis systems]{Global existence and lower bounds in a class of tumor-immune cell interactions chemotaxis systems}
\author[S. Gnanasekaran, A. Columbu, R. D\'iaz Fuentes, N. Nithyadevi]{$^{\sharp}$Shanmugasundaram Gnanasekaran, $^{\natural}$Alessandro Columbu, \\ $^{\natural}$Rafael D\'iaz Fuentes, ${^\flat}$Nagarajan Nithyadevi}
\begin{document}
	
	\maketitle
	
	\centerline{$^{\natural}$Dipartimento di Matematica e Informatica}
	\centerline{Universit\`{a} degli Studi di Cagliari}
	\centerline{Via Ospedale 72, 09124. Cagliari (Italy)}
	\medskip
	\centerline{${^\sharp}$Department of Mathematics}
	\centerline{Easwari Engineering College}
	\centerline{600089, Chennai (India)}
	\medskip
	\centerline{${^\flat}$Department of Applied Mathematics}
	\centerline{Bharathiar University}
	\centerline{641046, Coimbatore (India)}

	\begin{abstract}
		\justifying
		This paper investigates the properties of classical solutions to a class of chemotaxis systems that model interactions between tumor and immune cells. Our focus is on examining the global existence and explosion of such solutions in bounded domains of  $\mathbb{R}^n$, $n\geq3$, under Neumann boundary conditions. We distinguish between two scenarios: one where all equations are parabolic and another where only one equation is parabolic while the rest are elliptic. Boundedness is demonstrated under smallness assumptions on the initial data in the former scenario, while no such constraints are necessary in the latter. Additionally, we provide estimates for the blow-up time of unbounded solutions in three dimensions, supported by numerical simulations.
		
		\textbf{Keywords:} Blow--up, Boundedness, Classical solutions, Numerical Simulations, Tumor--immune cell interaction.  \\
		\textbf{2020 Mathematics Subject Classification:} 35A01; 35A09; 35B44; 92C17; 65N06. 
	\end{abstract}

	
	\section{Introduction and motivations} \label{sec:introduction}
	
	Chemotaxis refers to the movement of microorganisms in response to chemical signals. To gain a deeper understanding of this phenomenon in mathematical terms, one can explore the classical Keller-Segel system. Over the past few decades, researchers have extensively studied the classical Keller-Segel model and its various modified versions, recognizing its significance in the fields of mathematics and biology. For the latest advancements in this area, one can refer to the work by Lankeit and Winkler \cite{lan}. It is noteworthy that Keller and Segel \cite{keller} proposed the consumption model in 1971 to describe the motion of {\it E-coli} bacteria in response to oxygen gradient and consumption. The model is represented by the following system of partial differential equations
	\begin{equation}\label{1.1}
		\begin{cases}
			\bu_t= \Delta \bu-\chi \nabla\cdot(\bu \gv),\\
			\bv_t= \Delta \bv- u\bv.
		\end{cases}
	\end{equation}
	In this system, $\bu$ represents the density of the bacteria, $\bv$ represents the oxygen concentration, and $\chi>0$ is the chemotaxis coefficient. The model predicts the formation of a traveling band of bacteria when the chemotaxis coefficient exceeds a critical value, which can be confirmed through practical observations. Tao \cite{ytao} derived the condition for the global existence of a classical solution to the system \eqref{1.1} for $n\geq 2$. Later a weak global solution to the system \eqref{1.1} is investigated by Tao and Winkler \cite{taowinkler} for $n\geq 3$. They also demonstrated that the solution converges towards constant equilibria as $t\to \infty$. Zhang and Li \cite{zhangli} have improved the condition for the global existence of a classical solution in two spatial dimensions. They showed that the condition stated in \cite{ytao} is needed for $n\geq 3$ and also showed that the convergence of the solution is exponential as time increases. Jiang et al. \cite{jiangwu} studied the blow-up properties of the above system \eqref{1.1} in three spatial dimensions based on the kinetic reformulation technique under the unproved assumption that blow-up occurs. These blow-up results generalized the earlier results available in the literature. In addition, the local non-degeneracy for blow-up points is also indicated. The global existence of the classical solution of the system \eqref{1.1}, under more relaxed assumptions, is proven by Baghaei and Khelghati \cite{kbaghaei}. 
	
	Fuest \cite{mfuest} studied the existence of solution of the following system 
	\begin{equation*}
		\begin{dcases}
			\bu_t= \Delta \bu-\chi \nabla\cdot(\bu \gv),\\
			\bv_t= \Delta \bv- u\bv,\\
			w_t= -\delta w+u.
		\end{dcases}   
	\end{equation*}
	It has been demonstrated that when $n\leq 2$ or $\|v_0\|_{\lis}\leq \frac{1}{3n}$, the system has a global bounded solution. In addition, it is also shown that the solution converges towards equilibrium point as $t\to \infty$. A similar result was subsequently studied by Liu et al. \cite{yliu} with the introduction of a source term $f(u)\leq \mu(u-u^r)$ for $n\geq 2$ in the first equation. Recently both the above two results have been partially improved by Frassu and Viglialoro in \cite{sfrassu} for $n\geq 5$. For more details related to those models, see \cite{jxiang, wzhang}.
	
	Wang et al. \cite{jwang} examined the global existence and asymptotic behaviour of the following predator-prey system with indirect prey--taxis (with positive constants and $f, g, h$ sufficiently regular)
	\begin{equation}\label{1.5}
		\begin{dcases}
			u_t=d_1 \Delta u- \nabla\cdot(\chi(w) u \nabla w)+bug(v)-uh(u),\\
			w_t= d_2\Delta w-\mu w+rv,\\
			v_t= d_3\Delta v+f(v)-ug(v),
		\end{dcases}
	\end{equation}
	providing the convergence rates of the solution. Rather than the prey--taxis, Ahn and Yoon \cite{iahn} analyzed the system, similar to \eqref{1.5} with indirect predator--taxis. In addition, by constructing an appropriate Lyapunov functional, the stability results are also given for $n\leq 2$. Recently with respect to the nonlinear diffusion, the existence of a classical solution was established in \cite{jxing2021} under some parameter assumptions. The authors also discussed in other scenarios the asymptotic behaviour of solutions.

	\quad Chemotaxis models offer valuable insights into immune cell migration mechanisms, enhancing our understanding of immune responses, inflammation, and related diseases. These models are connected to immunology research, drug development, and immunotherapy. Lymphocytes are one of the major immune cells, play a vital role in the immune system. They are produced from bone marrow stem cells and are present in blood and lymph tissues. Lymphocytes work together with other immune cells to protect the body against various microorganisms. They include B cells, which produce antibodies to fight foreign microorganisms, and T cells, which regulate immune responses. Most of the T cells require assistance from another immune cell to be activated. Activated T cells, such as Natural Killer cells (NK cells), specialize in destroying cancer cells and virus-infected cells. The dynamics of the above phenomena are mathematically described by Hu and Tao \cite{hutao} with the help of nonlinear parabolic partial differential equations similar to \eqref{1.5}
	\begin{equation}\label{1.6}
		\begin{dcases}
			\bu_t= \Delta \bu-\chi \nabla\cdot(\bu \gv),\\
			\bv_t= \Delta \bv+\bw-\bv-u\bv,\\
			\bw_t= \Delta \bw- u\bw+\bw(1-\bw).
		\end{dcases}
	\end{equation}
	The authors studied well-posedness, solvability and properties of classical solutions to system \eqref{1.6} in the situation where $\Omega$ is a two-dimensional domain; more precisely, for any $\chi>0$ and any sufficiently regular initial data  solutions are globally bounded. Furthermore, for $\chi$ is small enough and $\frac{1}{|\Omega|}\int_\Omega u_0<1$, convergence to equilibrium points as $t\to\infty$ is proved. On the other hand, also the asymptotic behaviour when $\frac{1}{|\Omega|}\int_\Omega u_0\geq 1$ is discussed.

	\subsection{Aim of the research} \label{subsec:aim}
	Exactly inspired by \cite{hutao}, in this work we aim at investigating model \eqref{1.6} in higher dimensions, in more simplified cases, introducing parameters in the equations to circumvent the nondimensionalization of the model. More precisely, we consider the following initial-boundary value problem that describes the tumor-immune cell interactions given by
	\begin{equation}\label{1}
		\begin{dcases}
			\bu_t= \Delta \bu-\chi \nabla\cdot(\bu \gv),\hspace*{0.5cm} &\text{in} \; \Omega\times(0,\tmax),\\
			\tau \bv_t= \Delta \bv+\alpha\bw-\beta\bv-\gamma u\bv, &\text{in} \; \Omega\times(0,\tmax),\\
			\tau\bw_t= \Delta \bw-\delta u\bw+\mu\bw(1-\bw), &\text{in} \; \Omega\times(0,\tmax),\\
			\frac{\partial \bu}{\partial\nu}=\frac{\partial \bv}{\partial\nu}=\frac{\partial \bw}{\partial\nu}=0, &\text{on} \; \partial\Omega\times(0,\tmax),\\
			\bu(x,0)=\bu_0(x), \quad \tau\bv(x,0)=\tau\bv_0(x), \quad \tau\bw(x,0)=\tau\bw_0(x),&x\in\overline{\Omega}.
		\end{dcases}
	\end{equation}
	Herein $\Omega\subset\mathbb{R}^n$ is a bounded domain, $n\geq 3$, with smooth boundary $\partial\Omega$, $\nu$ denotes the outward unit normal on $\partial\Omega$ and $\tau \in \{0,1\}$. Moreover, with $\tmax$ we are indicating a positive value, possibly equal to infinity, representing the maximum instant of time up to which solutions are defined. The unknown function $\bu=\bu(x,t)$ represents the density of lymphocytes, $\bv=\bv(x,t)$ describes the chemical signal secreted by tumor cells and $\bw=\bw(x,t)$ denotes the density of tumor cells, initially distributed as $u_0(x)$, $\tau v_0(x)$, and $\tau w_0(x)$.  
	Additionally, the parameters $\chi$, $\alpha$, $\beta$, $\gamma$, $\delta$, $\mu$ are assumed to be positive constants. 
	
	The system \eqref{1} describes the interaction of two cells with one chemical secreted by the tumor cells. The term $-\chi\nabla\cdot(\bu\nabla\bv)$ represents the movement of the lymphocytes towards the chemical signal and assume that there is a natural logistic growth term $\mu\bw(1-\bw)$ for the cancer cells with growth coefficient $\mu$. The constant $\alpha$ is the production rate of the chemical by the tumor cells, $\beta$ is the decay rate of the chemical and $\gamma$ is the consumption rate of the chemical by the lymphocytes. The parameter $\delta$ is known as the destruction rate of the tumor cells by the lymphocytes.

	\quad Motivated by the above research works and biological applications which were mentioned earlier, we focus our analysis concerning system \eqref{1} discussing the following scenarios.
	\begin{itemize}
		\item[$\triangleright$] In the fully parabolic case (i.e., $\tau=1$), we prove global existence and boundedness of classical solutions, under smallness conditions involving the parameters and the initial data (see Theorem \ref{t1}). 
		\item[$\triangleright$] In the parabolic-elliptic-elliptic case (i.e., $\tau=0$), we establish the same results as above without any restriction on the parameters and the initial data (see Theorem \ref{t1}).
		\item[$\triangleright$] In 3D domains, lower bounds for the maximal existence time of blowing-up solutions are derived (see Theorem \ref{t3}).
		\item[$\triangleright$] Some numerical simulations in three dimensions are performed (see Section $\S$\ref{sec:numexamples}).
	\end{itemize}

	\subsection{Presentation of the main results} \label{subsec:mainresults}
	In order to formally present the aforementioned analysis, let us suppose that initial values $\bu_0$, $\tau\bv_0$ and $\tau\bw_0$ satisfy
	\begin{equation}\label{2}
		\begin{dcases}
			\bu_0\in\cso,\quad \mbox{with} \quad \bu_0 \geq 0\quad\mbox{in}\: \Omega,\\
			\tau\bv_0, \tau\bw_0\in\wsq,\quad \mbox{for some}\,\, { q} >n,\quad \mbox{with} \quad \tau\bv_0, \tau\bw_0 \geq 0\quad\mbox{in}\: \Omega.
		\end{dcases}
	\end{equation}
	With such a preparation,  the main theorems are stated as follows.
	\begin{theorem}\label{t1}
		For $n\geq 3$, let $\Omega \subset\rsn$ be a  bounded domain with smooth boundary, $q >n$ and $\tau \in \{0,1\}$.   Then  for any initial data $(\bu_0, \tau \bv_0, \tau \bw_0)$ satisfying \eqref{2} and additionally 
		\begin{align}\label{ConditionBoundednessTh}
			\tau \max\left\{\frac{\alpha}{\beta}, \frac{\alpha}{\beta}\|\bw_0\|_{\lis}, \|\bv_0\|_{\lis} \right\}<\frac{\pi}{\chi}\sqrt{\frac{2}{n}},
		\end{align}
		system \eqref{1} admits a unique nonnegative global classical solution $(\bu, \bv,\bw)$ which is uniformly bounded-in-time, in the sense that
		for some $C>0$	
		\begin{align*}
			\nut_{\lis}+\nvt_{\wsq}+\nwt_{\wsq}\leq C, \qquad\qquad \forall \, t > 0.
		\end{align*}
	\end{theorem}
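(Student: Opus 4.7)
The plan is to follow the standard three-step program for chemotactic systems: local existence with an extensibility criterion, pointwise bounds for the passive components $v$ and $w$, and a closed $L^p$-energy estimate for $u$ that bootstraps to $L^\infty$. An Amann-type quasilinear theory first produces a unique maximal non-negative classical solution $(u,v,w)$ on $[0,\tmax)$ satisfying the usual dichotomy that either $\tmax=\infty$ or $\|u(\cdot,t)\|_{L^\infty(\Omega)}+\|v(\cdot,t)\|_{W^{1,q}(\Omega)}+\|w(\cdot,t)\|_{W^{1,q}(\Omega)}\to\infty$ as $t\nearrow\tmax$. Mass conservation $\int_\Omega u(\cdot,t)=\int_\Omega u_0$ and component-wise non-negativity are immediate from Neumann data and the maximum principle; comparison applied to the third equation gives $w\le\max\{1,\|w_0\|_{L^\infty}\}$, and for $\tau=1$ the same technique on the second equation yields $v\le\max\{\|v_0\|_{L^\infty},(\alpha/\beta)\|w\|_{L^\infty}\}$. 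The hypothesis \eqref{ConditionBoundednessTh} is precisely tailored to this estimate, reducing to the scale-invariant smallness $\chi\|v\|_{L^\infty}<\pi\sqrt{2/n}$ throughout $[0,\tmax)$; for $\tau=0$ the analogous $L^\infty$-bound for $v$ follows directly from the elliptic equation, which explains why no restriction on the data is needed in that case.

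The heart of the argument is an $L^p$-energy estimate. Testing the first equation against $u^{p-1}$ and integrating by parts gives
\begin{align*}
\frac{1}{p}\frac{d}{dt}\int_\Omega u^p+\frac{4(p-1)}{p^2}\int_\Omega|\nabla u^{p/2}|^2=-\frac{\chi(p-1)}{p}\int_\Omega u^p\,\Delta v.
\end{align*}
For $\tau=0$, substituting $\Delta v=-\alpha w+\beta v+\gamma uv$ leaves only the $\alpha w$-contribution to be absorbed, the remaining two terms having favourable signs; this is handled via the $L^\infty$-bound on $w$, mass conservation, and the Gagliardo-Nirenberg inequality, yielding uniform-in-time $L^p$-bounds for every finite $p$. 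In the fully parabolic case $\tau=1$ the appearance of $v_t$ in $\Delta v$ obstructs this direct approach, and the plan is instead to work with a weighted entropy of the form $\int_\Omega u^p e^{\chi v}$ (equivalently, study $z=ue^{-\chi v}$), whose time derivative rearranges into a perfect-square diffusive dissipation plus lower-order pieces; the cross-gradient contribution $\chi\int_\Omega u^p e^{\chi v}\nabla u\cdot\nabla v$ can be absorbed into the diffusive dissipation $\int_\Omega e^{\chi v}|\nabla u^{p/2}|^2$ precisely when $\chi\|v\|_{L^\infty}$ lies below the sharp Young-type threshold $\pi\sqrt{2/n}$.

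Once $\|u(\cdot,t)\|_{L^p(\Omega)}$ is controlled for some $p>n/2$, a standard Moser/semigroup iteration based on the Duhamel representation of $u$ and the Neumann heat-semigroup smoothing estimates upgrades this to the uniform $L^\infty$-bound on $u$; parabolic and elliptic regularity then deliver the $W^{1,q}$-bounds on $v$ and $w$, contradicting blow-up in the extensibility criterion and forcing $\tmax=\infty$. The hardest step should be the fully parabolic case $\tau=1$, namely identifying the precise weighted functional and verifying that the threshold $\pi\sqrt{2/n}$ emerges from the sharp Young-inequality balance between the chemotactic and diffusive contributions; the remaining pieces (local theory, comparison principle on $v$ and $w$, Moser bootstrap) are routine adaptations of well-established chemotaxis techniques.
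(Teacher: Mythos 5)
Your overall architecture (local existence plus extensibility criterion, comparison bounds for $v$ and $w$, a closed $L^p$ estimate for $u$ for some $p>\frac n2$, semigroup bootstrap to $L^\infty$) coincides with the paper's, and your treatment of $\tau=0$ is essentially the paper's Lemma \ref{Alex1}: test with $u^{p-1}$, substitute the elliptic equation for $\Delta v$, discard the good-signed terms and absorb $\alpha\chi(p-1)M_2\int_\Omega u^p$ via Gagliardo--Nirenberg and mass conservation. The genuine gap is in the fully parabolic case, the very step you flag as hardest. The weight you propose, $\phi(v)=e^{\chi v}$ (equivalently the substitution $z=ue^{-\chi v}$), cannot produce the perfect-square/absorption structure you describe, no matter how small $\chi\|v\|_{L^\infty}$ is. Writing $v_1=\chi v$ and testing with $u^{p-1}\phi(v_1)$, the time derivative of $\int_\Omega u^p\phi(v_1)$ produces the two dissipation terms $-(p-1)\int_\Omega u^{p-2}\phi\,|\nabla u|^2$ and $-\int_\Omega\left(\tfrac1p\phi''-\phi'\right)u^p|\nabla v_1|^2$ together with the cross term $\int_\Omega\left((p-1)\phi-2\phi'\right)u^{p-1}\nabla u\cdot\nabla v_1$, and pointwise domination of the cross term requires the discriminant condition
\begin{equation*}
\left((p-1)\phi-2\phi'\right)^2\;\le\;4(p-1)\,\phi\left(\tfrac1p\phi''-\phi'\right).
\end{equation*}
For $\phi(x)=e^{\lambda x}$, any $\lambda\in\mathbb{R}$, this reduces (already in the most favourable case, i.e.\ without the $(1-\epsilon)$ margin) to $(p-1)^2+4\lambda^2\le\frac{4(p-1)}{p}\lambda^2$, that is $(p-1)^2\le-\frac{4\lambda^2}{p}$, which is false identically. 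Hence no exponential weight, however tuned, absorbs the chemotactic cross term into the diffusive dissipation, and no threshold $\pi\sqrt{2/n}$ can ``emerge from a Young-inequality balance'' in that setting; naive weights of this family are what produce the much worse smallness conditions of the type $\chi\|v_0\|_{L^\infty}\le\frac{1}{6(n+1)}$ known from \cite{ytao}.

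What the paper actually does (Lemmas \ref{l6} and \ref{l4}, following \cite{qzhang,kbaghaei2}) is to solve the discriminant relation as an \emph{identity}: the weight is $\phi=e^{\zeta}$ with $\zeta'$ a tangent function, constructed precisely so that \eqref{l6.3} and \eqref{l6.4} hold, i.e.\ the quadratic form in $(\nabla u,\nabla v_1)$ is exactly a complete square, while an $\epsilon$-fraction of the $|\nabla u|^2$ dissipation is retained for the Gagliardo--Nirenberg absorption of $\int_\Omega u^p$. The price of this construction is that $\phi$ is well defined only while the argument of the tangent stays inside $\left(-\frac\pi2,\frac\pi2\right)$, which is exactly the restriction \eqref{l6.1} on $K=\chi M_3$; choosing $p$ close to $\frac n2$ and the explicit $\epsilon$ used in the proof of Theorem \ref{t1} converts \eqref{l6.1} into the hypothesis $\chi M_3<\pi\sqrt{2/n}$, i.e.\ \eqref{ConditionBoundednessTh}. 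So the threshold is the finite existence range of an ODE-constructed weight, not a sharp constant in a Young-type inequality. To repair your argument, replace $e^{\chi v}$ by this tangent-type weight; with that substitution the rest of your plan (comparison bounds reducing \eqref{ConditionBoundednessTh} to $\chi M_3<\pi\sqrt{2/n}$, the $L^p$ estimate, and the Moser/semigroup bootstrap, which is the paper's Lemma \ref{l5}) goes through as you stated.
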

	\begin{remark}
		Conversely to the case $\tau=1$, when  $\tau=0$ we see that \eqref{ConditionBoundednessTh} in Theorem \ref{t1} is automatically fulfilled, independently by the choice of the data of the problem. This stresses how dealing with simplified elliptic versions of chemotaxis models is, in general, more straightforward than for the fully parabolic ones. In this sense, it appears natural to investigate whether the introduction of damping terms (as for instance \textit{logistic sources}; see \cite{tellowinkler}, \cite{winklerhigher}) in the equation for $u$ in problem \eqref{1}, naturally for $\tau=1$, enables to relax restriction \eqref{ConditionBoundednessTh}.
	\end{remark}
	As to the estimate of the blow-up time for solutions to model  system \eqref{1}, we will make use of the following functional:
	\begin{equation}\label{DefFunctional}
		\Psi_\tau= \Psi_\tau(t):= \into u^2 +\tau \into \lvert \nabla v \rvert^4  + \tau \into \lvert \nabla w \rvert^2 \quad \textrm{for all } t \in (0,\tmax),
	\end{equation}
	with initial value
	\begin{equation}\label{defpsitau}
		\Psi_\tau(0) = \into u_0^2 +\tau \into \lvert \nabla v_0 \rvert^4  + \tau \into \lvert \nabla w_0 \rvert^2.
	\end{equation}
	For the upcoming analysis, it is important to note that we must consider an additional restriction on the domain.
	\begin{theorem}\label{t3}
		For $n=3$, $q \geq 4$, and $\tau\in\{0,1\}$, let $\Omega \subset\mathbb{R}^3$ be a bounded convex domain. Moreover, let $(u,v,w)$ be a nonnegative solution to model \eqref{1}, emanating from data as in \eqref{2}, blowing-up at finite time $\tmax$, in the sense that $\limsup_{t \to \tmax} \lVert u(\cdot,t) \rVert_{L^\infty(\Omega)} = +\infty$.
		Then, there exist computable constants $\mathcal{A}_\tau, \mathcal{B}_\tau, \mathcal{C}_\tau$ such that
		\begin{equation} \label{eq:t3}
			T_{\text{max}} \geq \int_{\Psi_\tau(0)}^{+\infty} \frac{d \Psi_\tau(t)}{\mathcal{A}_\tau \Psi_\tau(t)^3 + \mathcal{B}_\tau \Psi_\tau(t)^\frac32 + \mathcal{C}_\tau \Psi_\tau(t)^\tau},
		\end{equation}
		where $\Psi_\tau$ is defined in \eqref{DefFunctional}.
	\end{theorem}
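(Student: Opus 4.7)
The strategy is to derive a superlinear autonomous differential inequality of the form
\begin{equation*}
\Psi_\tau'(t) \leq \mathcal{A}_\tau\, \Psi_\tau(t)^3 + \mathcal{B}_\tau\, \Psi_\tau(t)^{3/2} + \mathcal{C}_\tau\, \Psi_\tau(t)^\tau, \qquad t\in(0,\tmax),
\end{equation*}
for the functional $\Psi_\tau$ defined in \eqref{DefFunctional}. Once this is established, separation of variables and integration from $0$ to $t\in(0,\tmax)$ give
\begin{equation*}
t \leq \int_{\Psi_\tau(0)}^{\Psi_\tau(t)} \frac{d\Psi}{\mathcal{A}_\tau \Psi^3 + \mathcal{B}_\tau \Psi^{3/2} + \mathcal{C}_\tau \Psi^\tau};
\end{equation*}
since the blow-up hypothesis $\lVert u(\cdot,t)\rVert_{L^\infty(\Omega)}\to\infty$, combined with a standard $L^p$-boundedness/extensibility criterion for \eqref{1}, forces $\Psi_\tau(t)\to+\infty$ as $t\to\tmax$, sending $t\to\tmax$ yields \eqref{eq:t3}.

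To produce the differential inequality I would test each equation in \eqref{1} with a suitable multiplier. Multiplying the first equation by $2u$ and integrating by parts gives $\frac{d}{dt}\into u^2 = -2\into|\gu|^2 + 2\chi\into u\,\gu\cdot\gv$; in the cross term, Young's inequality with a small parameter $\varepsilon$ splits off $\varepsilon\into|\gu|^2 + C_\varepsilon\into u^2|\gv|^2$. The three-dimensional embedding $W^{1,2}(\Omega)\hookrightarrow L^6(\Omega)$ and the Gagliardo--Nirenberg inequality then bound the residual nonlinearity by a combination of $(\into u^2)^3$, $\into|\gv|^4$ and $\into|\gu|^2$ (the last being absorbable into the dissipation). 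When $\tau=0$, I instead exploit the elliptic structure: the second and third equations directly give $\lv=\beta v+\gamma uv-\alpha w$, while $v$ and $w$ can be estimated via elliptic $W^{2,p}$ regularity and the maximum principle in terms of $u$, reducing the right-hand side to a cubic polynomial in $\lVert u\rVert_{L^2(\Omega)}^2$.

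For the parabolic case $\tau=1$, the harder contributions come from differentiating $\into|\gv|^4$ and $\into|\gw|^2$. Using $\frac{d}{dt}\into|\gv|^4 = 4\into|\gv|^2\,\gv\cdot\gv_t$, integrating by parts, and invoking the pointwise identity $\frac12\Delta|\gv|^2 = |D^2 v|^2 + \gv\cdot\nabla\lv$, I rewrite everything in terms of $|D^2 v|^2$ and the reaction terms $\alpha w$, $\beta v$, $\gamma uv$. The convexity of $\Omega$ is decisive here: combined with $\partial_\nu v=0$ on $\partial\Omega$, it yields $\partial_\nu|\gv|^2\leq 0$, so that the boundary integrals produced by integration by parts have the right sign and can be discarded. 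An analogous argument, testing the third equation with $-2\lw$ and exploiting convexity again, controls $\frac{d}{dt}\into|\gw|^2$. The remaining bulk terms, including the logistic contribution $\mu w(1-w)$, are estimated by Hölder's and Young's inequalities and interpolated via Gagliardo--Nirenberg so that in three dimensions they fit exactly within $\Psi_\tau^3$, $\Psi_\tau^{3/2}$ and $\Psi_\tau^\tau$.

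I expect the main technical obstacle to be the bookkeeping of exponents and constants in the Gagliardo--Nirenberg interpolations: every residual term obtained after Young's inequality must be dominated by one of the three admissible powers of $\Psi_\tau$, while strictly positive coefficients must be retained on all the dissipative quantities $\into|\gu|^2$, $\into|\gv|^2|D^2 v|^2$ and $\into|\lw|^2$ so that these can actually be absorbed into the left-hand side. Once this balancing is complete, the explicit expressions for $\mathcal{A}_\tau$, $\mathcal{B}_\tau$, $\mathcal{C}_\tau$ are read off directly from the chain of estimates, and \eqref{eq:t3} follows by the integration argument outlined above.
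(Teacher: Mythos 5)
Your proposal is correct and follows the same skeleton as the paper's argument: differentiate the functional $\Psi_\tau$ from \eqref{DefFunctional} term by term, use convexity of $\Omega$ only through the boundary sign condition $\nabla(\absgv^2)\cdot\nu\le 0$ (and its analogue for $w$), retain the dissipative quantities for absorption, arrive at the autonomous inequality \eqref{eq:t3demo}, and convert it into the lower bound \eqref{eq:t3} by separation of variables together with the extensibility criterion (Lemma \ref{l5} via Remark \ref{fromlinfinitotolpblowup}), which upgrades $L^\infty$ blow-up to $\Psi_\tau\to+\infty$. The one substantive difference is the interpolation tool: where you invoke $W^{1,2}(\Omega)\hookrightarrow L^6(\Omega)$ and Gagliardo--Nirenberg to handle $\into u^3$ and $\into \absgv^6$, the paper uses the Payne--Philippin--Vernier-Piro inequality \eqref{ineq:Payne} from \cite{lepayne}, valid on bounded convex domains of $\mathbb{R}^3$, whose constants $A_1,A_2,A_3$ are explicit in the geometric quantities $\rho=\min_{\partial\Omega}x\cdot\nu$ and $d=\max_{\partial\Omega}\lvert x\rvert$. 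This is not cosmetic: the theorem advertises \emph{computable} constants $\mathcal{A}_\tau,\mathcal{B}_\tau,\mathcal{C}_\tau$, and convexity is assumed in the statement precisely so that \eqref{ineq:Payne} applies; your route yields the same functional form (powers $3$, $\tfrac32$, plus dissipation) but with generic, non-explicit embedding constants, so the explicitness claimed in the statement is weakened. Two smaller points of divergence: testing the third equation with $-2\lw$ produces dissipation $\into\mlv[\Delta w]^2$-type terms and, after Young applied to the logistic contribution $\mu w(1-w)$, an additive constant, so for $\tau=1$ you would end with $\mathcal{C}_1\Psi_1+\mathcal{D}$ rather than the pure linear term $\mathcal{C}_1\Psi_1$ of \eqref{eq:t3demo}; the paper avoids this by writing $2\into\gw\cdot\nabla\left[\mu w(1-w)\right]=2\mu\into\absgw^2-4\mu\into w\absgw^2$ and discarding the nonpositive part. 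Likewise, for $\tau=0$ the elliptic $W^{2,p}$ regularity you mention is superfluous: substituting $\Delta v$ from the second equation, dropping the nonpositive terms $-\chi\beta\into u^2v-\chi\gamma\into u^3v\le 0$, and using $w\le M_2$ from \eqref{l2.2} already suffices.
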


	\quad Our article is organized as follows: In Section $\S$\ref{sec:Preliminary},  we present some basic inequalities, key lemmas and we prove the local existence of classical solutions to system \eqref{1}. Section $\S$\ref{sec:bounds} deals with the boundedness and global existence of such solutions. Section $\S$\ref{sec:blowupTmax} is devoted to blow-up analysis.
	Section $\S$\ref{sec:numexamples} presents the numerical examples of our considered system that validates such blow-up result.

	\section{Preliminaries, local existence and extensibility criterion} \label{sec:Preliminary}
	\quad We recall a few useful inequalities and key lemmas that we are going to use in the next sections.  
	The following proof of the local existence lemma is adopted from well-established arguments proved in \cite{horstman}.
	\begin{lemma}[Local Existence]\label{l1}
		Suppose that $\Omega\subset\rsn$, $n\geq 2,$ is a bounded domain with smooth boundary and $q>n$. Then for each nonnegative initial data satisfying \eqref{2}, there exists $\tmax\in (0,\infty]$ such that  the system \eqref{1} admits a unique nonnegative solution $(\bu, \bv, \bw)$ belonging to
		\begin{align*}
			\bu&\in { C^{0}}\left(\overline{\Omega}\times\left.\left[0,\tmax\right.\right)\right)\cap { C^{2,1}}\left(\overline{\Omega}\times\left(0,\tmax\right)\right),  \\
			\bv, \bw&\in { C^{0}}\left(\overline{\Omega}\times\left.\left[0,\tmax\right.\right)\right)\cap { C^{2,1}}\left(\overline{\Omega}\times\left(0,\tmax\right)\right)\cap { L^{\infty}_{loc}}\left(\left.\left[0,\tmax\right.\right);\wsq\right). 
		\end{align*}
		Furthermore, if $\tmax<\infty$, then
		\begin{equation} \begin{split}
				\limsup_{t\to \tmax}\Big(\nut_{\lis}+\nvt_{\wsq}+\nwt_{\wsq}\Big)= \infty.\label{l1.1}
		\end{split} \end{equation} 
	\end{lemma}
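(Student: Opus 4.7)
The approach is the classical Banach fixed-point strategy for chemotaxis systems, in the spirit of \cite{horstman} and its many adaptations. For $T \in (0,1)$ and $R > 0$ sufficiently large (in particular $R$ majorizes the initial data norms from \eqref{2}), introduce the closed bounded subset
\[
\mathcal{S}_T := \Big\{ (\tilde u,\tilde v,\tilde w) \in C^0(\overline{\Omega}\times[0,T];\mathbb{R}^3) : \sup_{t\in[0,T]}\big(\|\tilde u(\cdot,t)\|_{\lis}+\|\tilde v(\cdot,t)\|_{\wsq}+\|\tilde w(\cdot,t)\|_{\wsq}\big)\leq R\Big\},
\]
restricted to triples matching the prescribed initial data at $t=0$, equipped with a sup-norm that renders it complete. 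On $\mathcal{S}_T$ I build a solution map $\Phi$ as follows.

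Given $(\tilde u,\tilde v,\tilde w) \in \mathcal{S}_T$, in the case $\tau=1$ I first determine $\bw$ as the unique solution of the linear parabolic Neumann problem obtained by freezing coefficients in the third equation of \eqref{1} (replacing $\bu$ by $\tilde u$ and $-\mu \bw^2$ by $-\mu\tilde w\bw$), then $\bv$ as the solution of the linearized second equation (with $\bu$ replaced by $\tilde u$), and finally $\bu$ from the first equation using the freshly constructed $\bv$. Existence and $\wsq$-regularity of $\bv,\bw$ as well as $\lis$-regularity of $\bu$ follow from the variation-of-constants formula involving the Neumann heat semigroup, invoking standard $L^p$--$L^q$ smoothing estimates and Winkler's well-known gradient bounds. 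For $\tau=0$, the equations for $\bv$ and $\bw$ are elliptic: $\bw$ is obtained by a local contraction argument (exploiting the dissipativity of $-\mu \bw^2$) and $\bv$ by elliptic $W^{2,q}$ theory. In both cases, the embedding $\wsq\hookrightarrow\lis$ (valid thanks to $q>n$) furnishes pointwise control on $\gv$, essential for estimating the chemotactic drift.

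For $T$ chosen small, these same estimates guarantee that $\Phi$ maps $\mathcal{S}_T$ into itself and is a strict contraction there: applying them to the difference of two iterates---which solves a system of linear equations with zero initial data---yields a Lipschitz constant tending to $0$ as $T\to 0^+$. The Banach fixed-point theorem then produces a unique local-in-time mild solution, and classical $C^{2,1}$-regularity is secured by a standard parabolic Schauder bootstrap once the coefficients are known to be continuous. Nonnegativity is checked componentwise via the weak maximum principle: positivity is easily propagated for $\bw$ and $\bv$ thanks to the structure of their zero-order terms, while for $\bu$ one uses that the chemotactic flux $\bu\gv$ vanishes on $\{\bu=0\}$.

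Finally, the extensibility criterion \eqref{l1.1} follows by contradiction: if $\tmax<\infty$ but $\nut_{\lis}+\nvt_{\wsq}+\nwt_{\wsq}$ remains bounded on $[0,\tmax)$, one fixes $t_\star$ close to $\tmax$ and re-runs the local existence result starting from $(\bu(\cdot,t_\star),\tau\bv(\cdot,t_\star),\tau\bw(\cdot,t_\star))$, which lies in the spaces required by \eqref{2}, thereby extending the solution past $\tmax$ and contradicting its maximality. I expect the main technical obstacle to be closing the contraction in a norm simultaneously strong enough to dominate $\chi\nabla\cdot(\bu\gv)$ and compatible with only $\wsq$-regular data; this is precisely where the hypothesis $q>n$ is indispensable.
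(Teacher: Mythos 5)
Your proposal is correct and follows essentially the same route as the paper, whose proof of Lemma \ref{l1} simply invokes the standard Banach fixed-point argument combined with parabolic/elliptic regularity theory (citing \cite{horstman}) and the maximum principle for nonnegativity (citing \cite{Evans-2010-PDEs}). Your write-up merely makes explicit the details that the paper delegates to those references, so the two arguments coincide in substance.
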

	\begin{proof}
		The proof can be derived by standard arguments involving the Banach fixed point theorem and the parabolic and elliptic regularity theories. (See for instance \cite{horstman}.) 
		Moreover, for $\tau=\{0,1\}$, $\underline{u}\equiv0$ is a subsolution of the equation for $u$ and along to \eqref{2} this implies that $u\geq0$ for all $(x,t)\in\Bar{\Omega}\times(0,\tmax)$, see \cite[$\S$6.4 and $\S$7.1.4]{Evans-2010-PDEs}. An analogous argument holds for $v$ and $w$.  
	\end{proof}
	From now on, with $(u,v,w)$ we will refer to the local-in-time solution to our model \eqref{1} emanating from initial data \eqref{2}, provided by Lemma \ref{l1}.
	\begin{lemma}\label{l2}
		The solution $(u,v,w)$ satisfies
		\begin{equation} \begin{split}
				\nut_{\los}= M_1:=\|\bu_0\|_{\los},\hspace*{3.5cm} \forall t\in(0, \tmax)\label{l2.1}
		\end{split} \end{equation} 
		and
		\begin{align}
			\nwt_{\lis}&\leq M_2:=\max\Big\{1, \tau\|\bw_0\|_{\lis}\Big\}, \hspace*{2cm}& \forall t\in(0, \tmax),\label{l2.2}\\
			\nvt_{\lis}&\leq M_3:=\max \Big \{\frac{\alpha}{\beta}M_2, \tau\|\bv_0\|_{\lis}\Big\}, &\forall t\in(0, \tmax).\label{l2.3}
		\end{align} 
		\begin{proof}
			Integration of the first equation in \eqref{1} gives
			\begin{align*}
				\dt\ints\bu=0 \quad \text{for all $t\in(0,\tmax)$}.
			\end{align*}
			Therefore, this problem preserves mass for $u$, in the sense that \eqref{l2.1} holds. 
			Further \eqref{l2.2} and \eqref{l2.3} are consequences of the elliptic and parabolic maximum principle (we refer again to \cite[$\S$6.4 and $\S$7.1.4]{Evans-2010-PDEs}). 
			Indeed, if $\tau=0$ the third equation makes that $\overline{w}\equiv1$ is a supersolution, so that $0\leq w\leq1$ for all $(x,t)\in\Bar{\Omega}\times(0,\tmax)$. We exploit these information to observe that $\frac \alpha \beta M_2$ is a supersolution for the second equation. The case $\tau=1$ is similar but also the initial condition has to be taken into consideration. 
		\end{proof}
	\end{lemma}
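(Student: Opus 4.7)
The plan is to establish the three bounds in turn, using conservation of mass for \eqref{l2.1} and two applications of the maximum/comparison principle for \eqref{l2.2}--\eqref{l2.3}. The key task is simply to identify appropriate constant supersolutions; there is no substantial analytic obstacle beyond verifying the algebraic conditions, and the main subtlety worth flagging is to check that the comparison principles are applicable in the quasilinear parabolic setting with Neumann data, which is standard and documented in the sources cited by the authors.

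For \eqref{l2.1}, I would integrate the first equation of \eqref{1} over $\Omega$. The homogeneous Neumann conditions on $u$ and $v$ eliminate both $\int_\Omega \Delta u$ and $\chi\int_\Omega \nabla\cdot(u\nabla v)$ via the divergence theorem, leaving $\frac{d}{dt}\int_\Omega u(\cdot,t) = 0$. Integrating in time yields the conservation of total mass \eqref{l2.1}.

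For \eqref{l2.2}, I would compare $w$ with the constant $\bar w \equiv M_2 = \max\{1,\tau\|w_0\|_{L^\infty(\Omega)}\}$. Since $u \geq 0$ (from Lemma \ref{l1}) and $M_2 \geq 1$, the reaction evaluated at $\bar w$ satisfies $-\delta u \bar w + \mu \bar w(1-\bar w) \leq 0$, so $\bar w$ is a supersolution of the $w$-equation: in the elliptic case ($\tau=0$) the elliptic maximum principle immediately gives $w \leq 1$, while in the parabolic case ($\tau=1$), using also $\bar w \geq w_0$, parabolic comparison yields $w \leq M_2$. With this in hand, \eqref{l2.3} follows by the same recipe for $v$: since $u,v\geq 0$ and $w \leq M_2$, the right-hand side of the $v$-equation is controlled by $\alpha w - \beta v - \gamma u v \leq \alpha M_2 - \beta v$, so the constant $\bar v \equiv M_3$ satisfies $-\Delta \bar v + \beta \bar v \geq \alpha M_2$ and, when $\tau=1$, also $\bar v \geq v_0$; the same comparison argument then delivers the pointwise bound $v \leq M_3$.
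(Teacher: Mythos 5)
Your proposal is correct and follows essentially the same route as the paper: mass conservation for \eqref{l2.1} via integration of the first equation with the Neumann conditions, and constant supersolutions $\overline w\equiv M_2$, $\overline v\equiv M_3$ combined with the elliptic/parabolic comparison principles for \eqref{l2.2}--\eqref{l2.3}. Your write-up merely makes explicit the algebraic verifications (e.g.\ $-\delta u M_2+\mu M_2(1-M_2)\le 0$ and $\beta M_3\ge \alpha M_2$) that the paper leaves implicit.
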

	The following result allows us to establish boundedness of solutions once some uniform-in-time estimates of these in appropriate Lebesgue spaces are derived.
	
	{\em (If not specified, all the constants are positive and the small-letter constants $c_i$ are local to each proof.)}
	\begin{lemma}[Extensibility criterion]\label{l5}
		Suppose there exists $p>\frac{n}{2}\geq 1$ such that
		\begin{align*}
			\ssup \nut_{\lps}<\infty.
		\end{align*}
		Then we have
		\begin{equation*} 
			\ssup\Big(\nut_{\lis}+\nvt_{\wsq}+\nwt_{\wsq}\Big)< \infty. 
		\end{equation*} 
		In particular $\tmax=\infty$ and $u\in L^\infty ((0,\infty),L^\infty(\Omega))$.
	\end{lemma}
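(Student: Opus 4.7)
The plan is to convert the hypothesis $u\in L^\infty((0,\tmax);\lps)$ with $p>\frac{n}{2}$ into a uniform $\lis$-bound on $u$ via a semigroup bootstrap: first I would improve the regularity of $v$ and $w$, then iterate the integrability of $u$ up to $\lis$, and finally close with the extensibility alternative \eqref{l1.1}.

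To begin with, Lemma \ref{l2} guarantees that $v,w$ are uniformly bounded in $\lis$, so the reaction terms $\alpha w-\beta v-\gamma uv$ and $-\delta uw+\mu w(1-w)$ driving the last two equations of \eqref{1} are uniformly bounded in $\lps$ in time. If $\tau=0$, Calder\'on--Zygmund regularity applied to the elliptic Neumann problems solved by $v$ and $w$ yields $v,w\in L^\infty((0,\tmax);W^{2,p}(\Omega))$; since $p>\frac{n}{2}$, the Sobolev embedding $W^{2,p}\hookrightarrow W^{1,q_1}$ is valid for some $q_1>n$. If $\tau=1$, one writes $v$ via Duhamel and applies the Neumann heat-semigroup estimate
\begin{equation*}
\|\nabla e^{\sigma(\Delta-\beta I)}f\|_{L^{q_1}(\Omega)}\leq C\bigl(1+\sigma^{-\frac12-\frac{n}{2}(\frac{1}{p}-\frac{1}{q_1})}\bigr)e^{-(\lambda_1+\beta)\sigma}\|f\|_{\lps},
\end{equation*}
whose time-singularity is integrable iff $\frac{1}{p}-\frac{1}{q_1}<\frac{1}{n}$, a condition compatible with some $q_1>n$ thanks to $p>\frac{n}{2}$; the argument for $w$ is analogous. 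In either regime,
\begin{equation*}
\sup_{t\in(0,\tmax)}\bigl(\|\nabla v(\cdot,t)\|_{L^{q_1}(\Omega)}+\|\nabla w(\cdot,t)\|_{L^{q_1}(\Omega)}\bigr)<\infty.
\end{equation*}

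Next, from the mild formulation $u(t)=e^{t\Delta}u_0-\chi\int_0^t e^{(t-s)\Delta}\nabla\cdot(u\nabla v)(s)\,\ds$, the divergence-form estimate $\|e^{\sigma\Delta}\nabla\cdot f\|_{L^{p_2}(\Omega)}\leq C(1+\sigma^{-\frac12-\frac{n}{2}(\frac{1}{r}-\frac{1}{p_2})})e^{-\lambda_1\sigma}\|f\|_{L^r(\Omega)}$ combined with H\"older's inequality $\|u\nabla v\|_{L^r}\leq\|u\|_{L^{p_1}}\|\nabla v\|_{L^{q_1}}$, $\frac{1}{r}=\frac{1}{p_1}+\frac{1}{q_1}$, upgrades a uniform-in-time bound of $u$ in $L^{p_1}(\Omega)$ to one in $L^{p_2}(\Omega)$ whenever $\frac{1}{p_1}+\frac{1}{q_1}-\frac{1}{p_2}<\frac{1}{n}$. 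Because $q_1>n$, each iteration strictly lowers $\frac{1}{p_1}$, so starting from $p_1=p$ finitely many iterations carry $u$ into $L^\infty((0,\tmax);\lis)$. Feeding this $\lis$-control back into the $v$- and $w$-equations and reapplying the elliptic or parabolic $L^q$-theory produces $v,w\in L^\infty((0,\tmax);\wsq)$ for the specific $q>n$ of \eqref{2}; summing the three bounds, \eqref{l1.1} rules out $\tmax<\infty$ and delivers the announced $u\in L^\infty((0,\infty);\lis)$.

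The main technical hurdle is the time-uniformity of the bootstrap: a naive application of Duhamel produces constants that blow up as $t\to\tmax$. The standard remedy is to split $\int_0^t=\int_0^{\max(t-1,0)}+\int_{\max(t-1,0)}^t$, exploiting the exponential factor $e^{-\lambda_1(t-s)}$ on the far portion and the integrability of the time-singularity $(t-s)^{-\sigma}$, $\sigma<1$, near $s=t$. This yields $T$-independent constants and lets the iteration close.
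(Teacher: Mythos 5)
Your proposal is correct in substance and follows the same overall strategy as the paper (Duhamel representations combined with Neumann heat-semigroup estimates), but the mechanism by which you reach the $\lis$ bound on $u$ is genuinely different. The paper performs a one-shot argument: after establishing $\sup_t\|\nabla v(\cdot,t)\|_{L^{q_0\zeta}(\Omega)}<\infty$ for a single exponent $q_0\zeta>n$, it estimates $\|u\nabla v\|_{L^{q_0}(\Omega)}$ by interpolating $u$ between $\los$ (mass conservation, Lemma \ref{l2}) and $\lis$, which yields $\sup_t\nut_{\lis}\le C+c\,\big(\sup_t\nut_{\lis}\big)^{\sigma_2}$ with $\sigma_2<1$ and closes by absorption (Young's inequality). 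You instead run a finite Lebesgue-exponent iteration $L^{p_1}\to L^{p_2}\to\cdots\to\lis$, gaining the fixed amount $\frac1n-\frac1{q_1}>0$ in the reciprocal exponent at each step; this avoids the interpolation-absorption trick (and does not even use mass conservation), at the cost of several iterations. Two further points where your argument is in fact more careful than the paper's: you treat $\tau=0$ by elliptic Calder\'on--Zygmund theory rather than by a (formally parabolic) variation-of-constants formula, and you explicitly upgrade $v,w$ to $\wsq$ for the specific $q$ of \eqref{2} before invoking \eqref{l1.1}, whereas the paper only exhibits bounds on $\nabla v,\nabla w$ in $L^{q_0\zeta}(\Omega)$ with $q_0\zeta\le q$ and leaves that last upgrade implicit.

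One caveat you should add: in the parabolic case $\tau=1$ the exponent $q_1$ must also satisfy $q_1\le q$, which is possible since the range $n<q_1<\min\big\{q,\frac{np}{(n-p)_+}\big\}$ is nonempty. Without this constraint the initial-data term $\nabla e^{t\Delta}v_0$ in your Duhamel formula need not be bounded in $L^{q_1}(\Omega)$ as $t\to0$, because $v_0$ is only assumed to lie in $\wsq$; the paper enforces exactly this through the requirement $\zeta q_0\le q$. With that adjustment your bootstrap closes as described.
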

	\begin{proof}
		Let $q>n$ and for each fixed $p>\frac{n}{2}$, let us define
		\begin{equation*} 
			\frac{np}{(n-p)_+}=
			\begin{dcases}
				\infty,  &\quad \mbox{if}\quad p\geq n,\\
				\frac{np}{n-p}, & \quad \mbox{if}\quad \frac{n}{2}< p < n,
			\end{dcases}
		\end{equation*} 
		and choose $q<\frac{np}{(n-p)_+}$ and $1<q_0<q$ fulfilling $n<q_0<\frac{np}{(n-p)_+}$
		which enables to take $\zeta>1$, \: $n < \zeta  q_0<\frac{np}{(n-p)_+}$ and $\zeta q_0 \leq q$. We fix arbitrary $t\in(0,\: \tmax)$. Applying the variation of constants formula to the second equation of $\eqref{1}$, we get
		\begin{align*}
			\bvt=e^{-\beta t}e^{t\Delta}\bv_0+\intt e^{-\beta(t-s)} e^{(t-s)\Delta}\Big(\alpha\bws-\gamma u(\cdot, s)\bvs\Big)\ds.
		\end{align*}
		From the above, we have the estimate
		\begin{align*}
			\ngvt_{\lros}\leq&\: e^{-\beta t}\|\nabla e^{t\Delta}\bv_0\|_{\lros}\\
			&+ \intt e^{-\beta(t-s)}\Big\|\nabla e^{(t-s)\Delta}\left(\alpha\bws-\gamma\bus\bvs\right)\Big\|_{\lros}\ds.
		\end{align*}
		By using the estimates for the Neumann heat semigroup  \cite{winkler} and $q_0\zeta \leq q$, we obtain
		\begin{equation*} 
			\begin{split}
				\ngvt_{\lros} \leq&\: c_1e^{-\beta t}\|\bv_0\|_{\wsq} \\
				&+c_2\intt e^{-\beta(t-s)}\left(1+\left(t-s\right)^{-\frac{1}{2}-\frac{n}{2}\left(\frac{1}{p}-\frac{1}{q_0\zeta}\right)}\right)e^{-\lambda(t-s)} \\
				&\times\Big\|\alpha\bws-\gamma \bus\bvs\Big\|_{\lps}\ds \\ 
				\leq&\: c_1\|\bv_0\|_{\wsq} \\
				&+\alpha c_2 M_2\intt e^{-\beta(t-s)}\left(1+\left(t-s\right)^{-\frac{1}{2}-\frac{n}{2}\left(\frac{1}{p}-\frac{1}{q_0\zeta}\right)}\right)e^{-\lambda(t-s)} \\
				&+ \gamma c_2 M_3 \intt e^{-\beta(t-s)}\left(1+\left(t-s\right)^{-\frac{1}{2}-\frac{n}{2}\left(\frac{1}{p}-\frac{1}{q_0\zeta}\right)}\right)e^{-\lambda(t-s)} \\
				&\times\Big\|\bus\Big\|_{\lps}\ds,
			\end{split}
		\end{equation*}
		where $c_1$ and $c_2$ are positive constants, and were we invoked Lemma \ref{l2}. Because of our assumptions $q_0 \zeta<\frac{np}{(n-p)}$ and $q_0 \zeta \leq q$, we can ensure that $\frac{1}{2}+\frac{n}{2}\left(\frac{1}{p}-\frac{1}{q_0\zeta}\right) <1$.
		Since by hypotheses $\lVert u(\cdot,t)\rVert_{L^{p}(\Omega)}<c_3$, for certain $c_3>0$, in view of the fact that the Gamma function gives 
		$\inti e^{-\alpha\psi}\left(1+\psi^{-\frac{1}{2}-\frac{n}{2}(\frac{1}{p}-\frac{1}{q_0\zeta})}\right)e^{-\lambda\psi} < \infty$, we can conclude that
		\begin{equation} 
			\ngvt_{\lros}\leq c_4, \qquad\qquad \forall\, t\in (0, \tmax),\label{l5.6}
		\end{equation} 
		where $c_4>0$. Next let $\tin =\mathrm{max}\{0,\: t-1\}$ and use the variation of constants formula to the first equation of $\eqref{1}$, to get
		\begin{equation} \begin{split}
				u(\cdot, t)=&e^{(t-\tin)\Delta}\bu(\cdot, \tin)-\chi\intti e^{(t-s)\Delta}\nabla\cdot\Big(\bus\gvs\Big)\ds. \label{l5.7}
		\end{split} \end{equation} 
		By taking $\lis$ norm on both sides of \eqref{l5.7}, we obtain
		\begin{align*}
			\nut_{\lis}\leq &\: \big\|e^{(t-\tin)\Delta}\bu(\cdot, \tin)\big\|_{\lis}+\chi\intti\Big\|e^{(t-s)\Delta}\nabla\cdot\Big(\bus\gvs\Big)\Big\|_{\lis}\ds
		\end{align*}
		for all $t\in(0,\tmax)$. If $t\leq 1$, then $\tin=0$ and we can use the maximum principle to get
		\begin{align*}
			\big\|e^{(t-\tin)\Delta}\bu(\cdot,\tin)\big\|_{\lis}= \big\|e^{(t-s_0) \Delta}\bu(\cdot, 0)\big\|_{\lis} \leq \big\|\bu(\cdot, 0)\big\|_{\lis}.
		\end{align*}
		If $t>1$, again using the Neumann heat semigroup properties and Lemma \ref{l2}, with $c_5>0$,
		\begin{align*}
			\big\|e^{(t-\tin)\Delta}\bu(\cdot,\tin)\big\|_{\lis}\leq c_5(t-\tin)^{-\frac{n}{2}}\big\|\bu(\cdot,\tin)\big\|_{\los} \leq c_5 M_1,
		\end{align*}
		because $t-\tin=1$.
		Similarly, for some	$c_6 > 0$ we have
		\begin{equation} \begin{split}
				\nut_{\lis}\leq &\:\max\Big\{\|\bu(\cdot, 0)\|_{\lis}, c_5M_1\Big\}  \\
				&+c_6\intti\left(1+\left(t-s\right)^{-\frac{1}{2}-\frac{n}{2}\left(\frac{1}{q_0}-\frac{1}{\infty}\right)}\right)e^{-\lambda(t-s)}\Big\|\bus\gvs\Big\|_{\lrs}\ds.\label{l5.8}
		\end{split} \end{equation} 
		Here by using the H\"older inequality and the Interpolation inequalities, \eqref{l2.1} and \eqref{l5.6} provide
		\begin{equation} \label{l5.11}
			\begin{split}
				\big\|\bus\gvs\big\|_{\lrs} \leq &\:  \:\big\|\bus\big\|_{{ L^{\widehat{\zeta} q_0}}(\Omega)}\: \big\|\gvs\big\|_{\lros} \\
				\leq &\:  \: \big\|\bus\big\|_{\lis}^{\sigma_2}\: \big\|\bus\big\|_{\los}^{1-\sigma_2}\: \big\|\gvs\big\|_{\lros} \\
				\leq &\:  c_{7} \big\|\bus\big\|_{\lis}^{\sigma_2},
			\end{split}
		\end{equation}
		where $\widehat{\zeta}$ is the dual exponent of $\zeta$ and $\sigma_2=1-\frac{1}{\widehat{\zeta} q_0} \in (0,1)$, for all $s\in (\tin, t)$ and $c_{7}>0$.  Inserting \eqref{l5.11} in \eqref{l5.8}, with the Gamma function we obtain  $\inti\left(1+\psi^{-\frac{1}{2}-\frac{n}{2q_0}}\right)e^{-\lambda\psi} < \infty$, where $\frac{1}{2}+\frac{n}{2q_0}<1$ because of $q_0>n$. This gives
		\begin{equation*} 
			\nut_{\lis}\leq \:\max\Big\{\|\bu(\cdot, 0)\|_{\lis}, c_5M_1\Big\}+c_{8} \big\|\bus\big\|_{\lis}^{\sigma_2}.
		\end{equation*} 
		Finally, using the Young inequality, we obtain
		\begin{equation} \begin{split}
				\nut_{\lis}\leq &c_{9}, \hspace*{2cm} \forall t\in(0, \tmax),\label{l5.13}
		\end{split} \end{equation} 
		where $c_{9}>0$.
		Similarly applying the variation of constants formula to the third equation of $\eqref{1}$, we get
		\begin{align*}
			\|\nabla\bwt\|_{\lros}\leq&\:\|\nabla e^{t\Delta}\bw_0\|_{\lros}+\delta \intt\Big\|\nabla e^{(t-s)\Delta}\bus\bws\Big\|_{\lros}\ds\\
			&+\mu \intt\Big\|\nabla e^{(t-s)\Delta}\bws\big(1-\bws\big)\Big\|_{\lros}\ds,
		\end{align*}
		and we obtain
		\begin{align*}
			\|\nabla\bwt\|_{\lros} \leq&\: c_{10}\|\bw_0\|_{\wsq}  \\
			&+c_{11}\nwt_{\lis}\intt\left(1+\left(t-s\right)^{-\frac{1}{2}-\frac{n}{2}\left(\frac{1}{p}-\frac{1}{q_0\zeta}\right)}\right)e^{-\lambda(t-s)}\Big\|\bus\Big\|_{\lps}\ds  \\
			&+c_{12}\intt\left(1+\left(t-s\right)^{-\frac{1}{2}-\frac{n}{2}\left(\frac{1}{p}-\frac{1}{q_0\zeta}\right)}\right)e^{-\lambda(t-s)}\Big\|\mu\bws(1-\bws)\Big\|_{\lps}.
		\end{align*}	
		In turn, since $w$ is bounded we obtain
		\begin{equation} \begin{split}
				\|\nabla\bwt\|_{\lros}&\leq c_{13}, \hspace*{2cm} t\in(0, \tmax),\label{l7.2}
		\end{split} \end{equation} 
		where $c_{13}>0$. Finally, if by contradiction $\tmax<\infty$, bounds \eqref{l5.6}, \eqref{l5.13} and \eqref{l7.2} would provide an inconsistency to the blow-up criterion \eqref{l1.1}. Hence $\tmax=\infty$, and the proof is given.
	\end{proof}
	This proved result gives a criterion toward boundedness which will be exactly used to esta\-blish such property for solutions to the model we are considering herein. But, the same result is also crucial to ensure that an unbounded solution to such models even blows-up in some energy function. In the specific we have this 
	\begin{remark}\label{fromlinfinitotolpblowup} Let us observe what follows: For $n=3$, if $(u,v,w)$ is a solution to model \eqref{1} which blows-up in finite time $\tmax$ in the sense that 
		\begin{equation*} 
			\limsup_{t\to \tmax} \nut_{\lis} = \infty,
		\end{equation*} 
		then $\lim_{t \to \tmax} \Psi_\tau(t) = +\infty$, where $\Psi_\tau$ is defined in \eqref{DefFunctional}.
		Indeed, if $\into u^2$ were uniformly bounded-in-time on $(0,\tmax)$, necessarily in view of Lemma \ref{l5} we would obtain that $u\in L^\infty((0,\infty);L^\infty(\Omega))$, which is a contradiction. 
	\end{remark}

	\subsection{Some general tools} \label{subsec:GenTools}
	We will rely on the following general results.
	
	The first one (see the main ideas in \cite{qzhang} and \cite{kbaghaei2}, inspired by \cite{ytao}) will be used to prove Theorem \ref{t1} in the case $\tau=1$. 
	\begin{lemma}\label{l6}
		Let $\epsilon\in(0,1)$ and $p>1$. Define the function
		\begin{align*}
			\phi(x):=e^{\zeta(x)}, \quad 0\leq x\leq K,
		\end{align*}
		where
		\begin{align*}
			\zeta(x):=-\frac{l}{2m}x+\frac{\sqrt{4km-l^2}}{2m}\int^x_0\tan\left(\frac{\sqrt{4km-l^2}}{2r}s+\arctan\frac{l}{\sqrt{4km-l^2}}\right)\ds
		\end{align*}
		with $k=(p-1)^2, l=-4(p-1)\epsilon, m=\frac{4}{p}(1+(p-1)\epsilon)$ and $r=\frac{4}{p}(p-1)(1-\epsilon)$. If
		\begin{equation} \begin{split}
				K<\frac{2}{\sqrt{p}}\sqrt{\frac{1-\epsilon}{1+p\epsilon}}\left(\frac{\pi}{2}+\arctan\sqrt{\frac{p}{1+(p-1)\epsilon-p\epsilon^2}}\epsilon\right),\label{l6.1}
		\end{split} \end{equation} 
		then the function $\phi(x)$ is well defined and satisfies the following properties
		\begin{equation} \label{l6.2}
			1\leq \phi(x)\leq \phi(K), \qquad 0\leq \phi^{'}(x)<\infty
		\end{equation} 
		and
		\begin{equation} \begin{split}
				\frac{1}{p}\phi^{''}(x)-\phi^{'}(x)\geq 0.\label{l6.3}
		\end{split} \end{equation} 
		Moreover we have
		\begin{equation} \begin{split}
				|(p-1)\phi(x)-2\phi^{'}(x)|-2\sqrt{(p-1)(1-\epsilon)\phi(x)\left(\frac{1}{p}\phi^{''}(x)-\phi^{'}(x)\right)}=0\label{l6.4}
		\end{split} \end{equation} 
		for all $0\leq x\leq K$.
	\end{lemma}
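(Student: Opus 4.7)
The plan is to reduce everything to studying $\eta(x) := \zeta'(x)$, since $\phi = e^\zeta$ gives $\phi' = \eta\phi$ and $\phi'' = (\eta' + \eta^2)\phi$, so all the stated properties of $\phi$ translate into algebraic conditions on $\eta$. Differentiating the defining formula for $\zeta$ yields
\[
\eta(x) = -\frac{l}{2m} + \frac{\sqrt{4km - l^2}}{2m}\tan\theta(x), \qquad \theta(x) := \frac{\sqrt{4km - l^2}}{2r}x + \arctan\frac{l}{\sqrt{4km - l^2}}.
\]
A routine calculation, using $1+\tan^2\theta = \sec^2\theta$ and completing the square in $m\eta^2 + l\eta + k$, shows that $\eta$ satisfies the Riccati ODE $r\eta'(x) = m\eta(x)^2 + l\eta(x) + k$ with $\eta(0)=0$. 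For the given values of $k,l,m,r$, the discriminant is $l^2 - 4km = -\frac{16(p-1)^2}{p}(1-\epsilon)(1+p\epsilon)<0$ for $p>1$ and $\epsilon\in(0,1)$, so the quadratic is everywhere strictly positive; consequently $\eta'>0$ and, together with $\eta(0)=0$, we get $\eta \ge 0$ on the whole interval of existence.

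For well-definedness on $[0,K]$, the only obstruction is the singularity $\theta(x) = \pi/2$. The corresponding maximal abscissa $K^\star$ satisfies
\[
K^\star = \frac{2r}{\sqrt{4km - l^2}}\Bigl(\frac{\pi}{2} - \arctan\frac{l}{\sqrt{4km - l^2}}\Bigr),
\]
and I would simplify this using the factorization $1+(p-1)\epsilon - p\epsilon^2 = (1-\epsilon)(1+p\epsilon)$ together with $\arctan(-x) = -\arctan x$ (recall $l<0$). The prefactor reduces to $\frac{2}{\sqrt{p}}\sqrt{\frac{1-\epsilon}{1+p\epsilon}}$ and the arctan term becomes $\arctan\sqrt{\frac{p}{1+(p-1)\epsilon - p\epsilon^2}}\,\epsilon$, so $K^\star$ is precisely the right-hand side of \eqref{l6.1}. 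Hence the smallness assumption gives $K<K^\star$, so $\eta$, $\zeta$, and $\phi$ are smooth on $[0,K]$ with $\phi'=\eta\phi$ finite. The bounds in \eqref{l6.2} are then immediate: $\zeta(0)=0$ and $\zeta'=\eta\ge 0$ give $1 = \phi(0) \le \phi(x) \le \phi(K)$, while $\phi'=\eta\phi \ge 0$.

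It remains to prove \eqref{l6.4}, from which \eqref{l6.3} follows automatically. Substituting $\phi'=\eta\phi$ and $\phi''=(\eta'+\eta^2)\phi$ one obtains
\[
\frac{\phi''}{p} - \phi' = \phi\Bigl[\frac{\eta'+\eta^2}{p} - \eta\Bigr],
\]
and replacing $\eta'$ via the Riccati ODE, together with the two algebraic identities $m+r = 4$ and $l - pr = -4(p-1)$ (both easy consequences of the definitions of $k,l,m,r$, using $pr = 4(p-1)(1-\epsilon)$), collapses the bracket exactly to $\frac{((p-1) - 2\eta)^2}{4(p-1)(1-\epsilon)}$. Multiplying by $\phi$ and noting that $\phi((p-1) - 2\eta) = (p-1)\phi - 2\phi'$ gives
\[
((p-1)\phi - 2\phi')^2 = 4(p-1)(1-\epsilon)\,\phi\Bigl(\frac{\phi''}{p} - \phi'\Bigr);
\]
taking square roots yields \eqref{l6.4}, and the non-negativity of the left-hand side forces $\frac{\phi''}{p}-\phi' \ge 0$, which is \eqref{l6.3}. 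The main obstacle I expect is the bookkeeping in this last identity: one must spot that $m+r = 4$ and $l - pr = -4(p-1)$ so that the quadratic in $\eta$ collapses into the perfect square $(2\eta - (p-1))^2$. A secondary care point is the algebraic reduction of $K^\star$ to the exact form in \eqref{l6.1}, which hinges on recognizing the factorization $1+(p-1)\epsilon - p\epsilon^2 = (1-\epsilon)(1+p\epsilon)$.
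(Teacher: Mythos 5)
Your proposal is correct and follows essentially the same route as the paper's proof: both derive the Riccati identity $r\zeta''=m(\zeta')^2+l\zeta'+k$ from the tangent representation, both deduce well-definedness from keeping the tangent's argument below $\pi/2$ (your explicit computation of the threshold $K^\star$ is just the paper's inequality chain read backwards), and both collapse the resulting quadratic in $\zeta'$ into the perfect square $\bigl(2\zeta'-(p-1)\bigr)^2$ to obtain \eqref{l6.3} and \eqref{l6.4} simultaneously. The identities $m+r=4$ and $l-pr=-4(p-1)$ that you isolate are exactly the simplifications the paper performs with explicit values.
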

	\begin{proof}
		First we prove that the function $\phi(x)$ is well defined. For any $p>1$ and $\epsilon\in (0,1)$, we attain
		\begin{align*}
			4km-l^2=\frac{16(p-1)^2}{p}\Big(1+(p-1)\epsilon-p\epsilon^2\Big)
			=\frac{16(p-1)^2}{p}(1-\epsilon)(1+p\epsilon)>0
		\end{align*}
		and
		\begin{equation} \begin{split}
				\frac{\sqrt{4km-l^2}}{2r}s+\arctan\frac{l}{\sqrt{4km-l^2}}
				\geq&\arctan\left(\frac{l}{\sqrt{4km-l^2}}\right)  \\
				\geq&-\arctan\left(\sqrt{\frac{p}{(1-\epsilon)(1+p\epsilon)}}\epsilon\right)
				>-\frac{\pi}{2}\label{l6.5}
		\end{split} \end{equation} 
		due to $4km-l^2>0$. Similarly we arrive at
		\begin{equation} \begin{split}
				\frac{\sqrt{4km-l^2}}{2r}s+\arctan\frac{l}{\sqrt{4km-l^2}}\leq & \frac{\sqrt{p}}{2}\sqrt{\frac{1+p\epsilon}{1-\epsilon}}K-\arctan\left(\sqrt{\frac{p}{(1-\epsilon)(1+p\epsilon)}}\epsilon\right)  \\
				< & \frac{\sqrt{p}}{2}\sqrt{\frac{1+p\epsilon}{1-\epsilon}}  \\
				&\times\left(\frac{2}{\sqrt{p}}\sqrt{\frac{1-\epsilon}{1+p\epsilon}}\left(\frac{\pi}{2}+\arctan\sqrt{\frac{p}{1+(p-1)\epsilon-p\epsilon^2}}\epsilon\right)\right)  \\
				&-\arctan\left(\sqrt{\frac{p}{(1-\epsilon)(1+p\epsilon)}}\epsilon\right)
				< \frac{\pi}{2}. \label{l6.6}
		\end{split} \end{equation} 
		From \eqref{l6.5} and \eqref{l6.6}, we conclude that the function $\zeta(x)$ is bounded. This gives $\phi(x)$ is well defined for $0\leq x\leq K$. Next we attain
		\begin{align*}
			\phi^{'}(x)=&\phi(x)\zeta^{'}(x)\\
			= & \phi(x) \left(-\frac{l}{2m}+\frac{\sqrt{4km-l^2}}{2m}\tan\left(\frac{\sqrt{4km-l^2}}{2r}x+\arctan\frac{l}{\sqrt{4km-l^2}}\right)\right)\\
			\geq& \phi(x)\zeta^{'}(0)
			\geq 0,
		\end{align*}
		for all $0\leq x\leq K$. Since $\phi^{'}(x)$ is continuous and nonnegative, we achieve $1\leq \phi(x)\leq \phi(K)$. Moreover, using \eqref{l6.5}, we have
		\begin{align*}
			\phi^{'}(x)=&\phi(x)\zeta^{'}(x) < \infty,
		\end{align*}
		for all $0\leq x\leq K$; these last relations prove \eqref{l6.2}. Now we calculate
		\begin{equation}\label{l6.7}
			\begin{split}
				\zeta^{''}(x)=&\frac{4km-l^2}{4mr}\sec^2\left(\frac{\sqrt{4km-l^2}}{2r}x+\arctan\frac{l}{\sqrt{4km-l^2}}\right) \\
				=&\frac{4km-l^2}{4mr}\left(1+\tan^2\left(\frac{\sqrt{4km-l^2}}{2r}x+\arctan\frac{l}{\sqrt{4km-l^2}}\right)\right).
			\end{split}
		\end{equation}
		From $\zeta^{'}(x)$, we arrive at
		\begin{equation} \begin{split}
				\frac{4m^2}{4km-l^2}\left(\zeta^{'}(x)+\frac{l}{2m}\right)^2=\tan^2\left(\frac{\sqrt{4km-l^2}}{2r}x+\arctan\frac{l}{\sqrt{4km-l^2}}\right).\label{l6.8}
		\end{split} \end{equation} 
		By substituting \eqref{l6.8} in \eqref{l6.7} we get
		\begin{equation} \begin{split}
				\zeta^{''}(x)=&\frac{4km-l^2}{4mr}\left(1+\frac{4m^2}{4km-l^2}\left(\zeta^{'}(x)+\frac{l}{2m}\right)^2\right)
				=\frac{1}{r}\Big(m\zeta^{'}(x)^2+l\zeta^{'}(x)+k\Big).\label{l6.9}
		\end{split} \end{equation} 
		Next, with the help of \eqref{l6.9}, we calculate
		\begin{align*}
			\frac{1}{p}\phi^{''}(x)-\phi^{'}(x)=&\phi(x)\left(\frac{1}{p}\Big(\zeta^{''}(x)+\zeta^{'}(x)^2\Big)-\zeta^{'}(x)\right)\\
			=&\frac{\phi(x)}{p}\left(\left(\frac{m}{r}+1\right)\zeta^{'}(x)^2+\left(\frac{l}{r}-p\right)\zeta^{'}(x)+\frac{k}{r}\right)\\
			=&\frac{\phi(x)}{p}\left(\frac{p}{(p-1)(1-\epsilon)}\zeta^{'}(x)^2-\frac{p}{1-\epsilon}\zeta^{'}(x)+\frac{p(p-1)}{4(1-\epsilon)}\right)\\
			=&\frac{\phi(x)}{4(p-1)(1-\epsilon)}\left(4\zeta^{'}(x)^2-4(p-1)\zeta^{'}(x)+(p-1)^2\right)\\
			=&\frac{1}{4(p-1)(1-\epsilon)}\frac{1}{\phi(x)}\left((p-1)\phi(x)-2\phi^{'}(x)\right)^2.
		\end{align*}
		This gives \eqref{l6.3} and \eqref{l6.4}.
	\end{proof}
	
	This further lemma is, conversely, employed in the analysis of Theorem \ref{t3}.
	\begin{lemma}
		Let $\Omega$ be a bounded convex domain in $\mathbb{R}^3$. Then for any nonnegative $f \in C^1(\Omega)$ and for every $\varepsilon>0$ it holds
		\begin{equation} \label{ineq:Payne}
			\into f^3 \leq A_1 \left(\into f^2\right)^\frac{3}{2} 
			+ \frac{A_2}{\varepsilon^3} \left(\into f^2\right)^3
			+ A_3 \varepsilon \into \lvert\nabla f\rvert^2,
		\end{equation}
		where 
		\begin{equation*}
			A_1 = \frac{3^{\frac32}}{2 \rho^\frac32},
			\quad
			A_2 = \frac{3^3}{4^\frac{15}{4}}\left(1 + \frac{d}{\rho}\right)^\frac32,
			\quad 
			A_3 = \sqrt{2} \left(1 + \frac{d}{\rho}\right)^\frac32,
		\end{equation*}
		and 
		\begin{equation*}
			\rho := \min_{\partial\Omega}{x \cdot \nu } > 0, \qquad d := \max_{\partial\Omega}{\lvert x \rvert}.
		\end{equation*}
		\begin{proof}
			The proof is a combination of the result in  \cite[Lemma A.2]{lepayne}   and known inequalities. (See details in \cite{VIGLIALORO-JMAA-BlowUp-Attr-Rep}). 
		\end{proof}
	\end{lemma}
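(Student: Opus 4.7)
The plan is to derive the inequality by combining a Payne--Schaefer--type divergence identity with the convex-domain trace estimate of Lemma A.2 in \cite{lepayne}, and then to split the resulting mixed terms by Young's inequality.

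The starting point is the divergence theorem applied to the vector field $x f^3$ in $\Omega\subset\mathbb{R}^3$, which produces
\begin{equation*}
3\into f^3 \,dx + 3\into f^2\, (x\cdot\nabla f)\,dx = \intbo f^3\,(x\cdot\nu)\,d\sigma.
\end{equation*}
Convexity of $\Omega$ guarantees $x\cdot\nu \geq \rho>0$ on $\partial\Omega$, while $|x|\leq d$ on $\overline\Omega$. Rearranging and using $x\cdot \nu \leq d$ on the boundary together with $|x|\leq d$ on $\Omega$, one gets a preliminary bound in which $\into f^3$ is controlled by $\intbo f^3\, d\sigma$ and $\into f^2|\nabla f|\,dx$; this is the step that isolates the boundary contribution and explains the $\rho^{-3/2}$ and $(1+d/\rho)^{3/2}$ factors that will appear in $A_1, A_2, A_3$.

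The next step is to invoke Lemma A.2 of \cite{lepayne}: this is the Payne-type trace estimate tailored to convex domains in $\mathbb{R}^3$ that dominates $\intbo f^3\, d\sigma$ by a combination of $(\into f^2)^{3/2}$ and a mixed term $(\into f^2)^{3/4}(\into|\nabla f|^2)^{3/4}$. Combined with Cauchy--Schwarz and a Gagliardo--Nirenberg interpolation on the volume piece $\into f^2|\nabla f|\,dx$, every contribution can be put in the form $(\into f^2)^a(\into|\nabla f|^2)^b$ with $a+b=3/2$, and with constants that factor cleanly through $\rho$ and $d$.

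Finally, one applies Young's inequality to the mixed term with conjugate exponents $4$ and $4/3$:
\begin{equation*}
\left(\into f^2\right)^{3/4}\left(\into|\nabla f|^2\right)^{3/4}
\leq \frac{A_2}{\varepsilon^3}\left(\into f^2\right)^3 + A_3\,\varepsilon \into|\nabla f|^2,
\end{equation*}
which is precisely what produces the $\varepsilon^{-3}$ dependence in the statement, while the pure $L^2$ piece gives $A_1(\into f^2)^{3/2}$. The main obstacle is not conceptual but bookkeeping: to land on exactly $A_1=3^{3/2}/(2\rho^{3/2})$, $A_2=3^3/4^{15/4}(1+d/\rho)^{3/2}$, and $A_3=\sqrt{2}(1+d/\rho)^{3/2}$ one must carefully track the constants through the Payne identity, the trace lemma, and the Young/Gagliardo--Nirenberg steps, matching exponents at each stage so that the $\rho$- and $d$-dependence separates in the advertised way. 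Since the excerpt already points to \cite{VIGLIALORO-JMAA-BlowUp-Attr-Rep} for the detailed computation, that bookkeeping is the only real work left.
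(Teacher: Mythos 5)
Your closing Young step (conjugate exponents $4$ and $4/3$, giving the $\varepsilon^{-3}$ and $\varepsilon$ weights) is exactly the right final move, and it is indeed where the intended derivation ends; the gap is upstream, in what you attribute to Lemma A.2 of \cite{lepayne}. That lemma is \emph{not} a trace estimate for $\intbo f^3$: it is an \emph{interior} inequality,
\begin{equation*}
\into f^3 \leq \left[\frac{3}{2\rho}\into f^2 + \left(1+\frac{d}{\rho}\right)\into f\lvert\nabla f\rvert\right]^{\frac{3}{2}},
\end{equation*}
and the proof intended by the paper (written out in \cite{VIGLIALORO-JMAA-BlowUp-Attr-Rep}) applies it directly, followed only by the algebraic inequality $(a+b)^{3/2}\leq \sqrt{2}\,\bigl(a^{3/2}+b^{3/2}\bigr)$, Cauchy--Schwarz on $\into f\lvert\nabla f\rvert$, and then your Young step. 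One can check that this chain reproduces the advertised constants exactly: $A_1=\sqrt{2}\left(\frac{3}{2\rho}\right)^{3/2}=\frac{3^{3/2}}{2\rho^{3/2}}$, and the common prefactor $\sqrt{2}\left(1+\frac{d}{\rho}\right)^{3/2}$ sitting in $A_2$ and $A_3$. No boundary integral occurs anywhere in the argument.

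By contrast, your architecture cannot be closed. You use the divergence identity $3\into f^3 + 3\into f^2 (x\cdot\nabla f) = \intbo f^3 (x\cdot\nu)$ with $x\cdot\nu\leq d$ to obtain $3\into f^3 \leq d\intbo f^3 + 3d\into f^2\lvert\nabla f\rvert$, and you then need a bound on $\intbo f^3$ that is free of $\into f^3$. But within the Payne toolkit the only estimate of $\intbo f^3$ comes from reading the \emph{same} identity in the opposite direction (using $x\cdot\nu\geq\rho$), namely $\intbo f^3 \leq \frac{3}{\rho}\into f^3 + \frac{3d}{\rho}\into f^2\lvert\nabla f\rvert$; substituting this back yields $3\into f^3 \leq \frac{3d}{\rho}\into f^3 + \cdots$, which is vacuous because $\rho\leq d$ always holds ($x\cdot\nu\leq\lvert x\rvert$ on $\partial\Omega$). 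Any genuinely independent trace bound of the form you postulate, controlling $\intbo f^3$ by $\left(\into f^2\right)^{3/2}$ and $\left(\into f^2\right)^{3/4}\left(\into\lvert\nabla f\rvert^2\right)^{3/4}$, already presupposes the interior $L^3$ estimate you are trying to prove, and in any case would not deliver the specific constants $A_1, A_2, A_3$. A symptom of the same confusion: you claim the step using $x\cdot\nu\leq d$ ``explains the $\rho^{-3/2}$ factor,'' but $\rho$ does not appear in that step at all; it enters only through the constants of the interior inequality above, raised to the power $\frac{3}{2}$ by the $\sqrt{2}$-splitting.
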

	\section{A priori estimates} \label{sec:bounds}
	\subsection{The case \texorpdfstring{$\tau = 1$}{\textepsilon=1}} \label{subsec:tau1}
	In this section we provide the global boundedness of the solution for the system \eqref{1}. The following lemma establishes the $\lps$ estimate of the first component of the solution.
	\begin{lemma}\label{l4}
		For all $p > 1$ and $\epsilon\in(0,1)$, let relation \eqref{l6.1} hold with $K=\chi M_3$, being $M_3$ as in \eqref{l2.3}. Then we have
		\begin{equation*} 
			\hspace*{3cm}\nut_{\lps}\leq C_1, \hspace*{3cm} \forall \, t\in(0,\tmax),
		\end{equation*} 
		for some constant $C_1>0$.
	\end{lemma}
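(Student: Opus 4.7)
The plan is to carry out an entropy-weighted $\lps$ estimate for $u$, in the spirit of the consumption-chemotaxis technique pioneered in \cite{ytao} and adapted in \cite{qzhang,kbaghaei2}, using the weight $\phi$ provided by Lemma \ref{l6}. Setting $K := \chi M_3$, bound \eqref{l2.3} ensures $\chi v(x,t)\leq K$ pointwise, and the standing smallness condition \eqref{l6.1} makes $\phi$ well-defined on $[0,K]$ with the structural properties \eqref{l6.2}--\eqref{l6.4} at hand. The natural energy to study is then
\[
y(t) := \into u^p\, \phi(\chi v), \qquad t\in(0,\tmax),
\]
which is comparable to $\nut_{\lps}^p$ since $1 \leq \phi(\chi v) \leq \phi(K)$.

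Differentiating $y$ along the system, i.e.\ testing the $u$-equation by $pu^{p-1}\phi(\chi v)$ and the $v$-equation by $\chi u^p\phi'(\chi v)$, and integrating by parts using the Neumann data, one arrives at
\begin{align*}
\frac{dy}{dt} = & -p(p-1)\into u^{p-2}\phi(\chi v)|\nabla u|^2 + p\chi \into u^{p-1}\bigl[(p-1)\phi(\chi v)-2\phi'(\chi v)\bigr]\nabla u\cdot\nabla v \\
& + \chi^2\into u^p\bigl[p\phi'(\chi v)-\phi''(\chi v)\bigr]|\nabla v|^2 + \chi\into u^p\phi'(\chi v)\bigl[\alpha w-\beta v-\gamma uv\bigr].
\end{align*}
The mixed gradient term is handled by Young's inequality with splitting constant $p(p-1)(1-\epsilon)$: the companion $u^p|\nabla v|^2$-contribution produced this way carries the coefficient $\frac{p\chi^2}{4(p-1)(1-\epsilon)}\cdot \frac{[(p-1)\phi-2\phi']^2}{\phi}$, which by identity \eqref{l6.4} collapses to exactly $\chi^2(\phi''(\chi v)-p\phi'(\chi v))$. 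By \eqref{l6.3} this annihilates the third term in the identity above, leaving
\[
\frac{dy}{dt} \leq -\tfrac{4(p-1)\epsilon}{p}\into |\nabla u^{p/2}|^2 + \chi \into u^p\phi'(\chi v)[\alpha w-\beta v-\gamma uv],
\]
where I also used $\phi\geq 1$ in the dissipation together with $|\nabla u^{p/2}|^2 = \tfrac{p^2}{4}u^{p-2}|\nabla u|^2$.

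Since $\phi'\geq 0$ by \eqref{l6.2} and $u,v \geq 0$, the contributions of $-\beta v$ and $-\gamma uv$ are nonpositive and may be discarded; bounding $w\leq M_2$ via \eqref{l2.2} and $\phi'(\chi v)$ by its continuous-image maximum $\phi'_{\max}$ on $[0,K]$ turns the source into $c_1 \into u^p$. Applying a Gagliardo--Nirenberg inequality to $u^{p/2}$ together with the mass conservation \eqref{l2.1} then yields $\into u^p \leq \eta \into |\nabla u^{p/2}|^2 + c_2(\eta)$ for every $\eta>0$. Choosing $\eta$ small absorbs the source into the dissipation. Adding $\lambda y$ to both sides, and dominating $y\leq \phi(K)\into u^p$ once more via GN and Young, one obtains a linear ODI of the form $y'(t)+\lambda y(t) \leq C$ for a suitable small $\lambda>0$, and integration together with the finiteness of $y(0)$ provides the claimed uniform-in-time bound on $y$, hence on $\nut_{\lps}$.

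The main obstacle is the cancellation step: the weight $\phi$ is engineered through \eqref{l6.3}--\eqref{l6.4} precisely so that the dangerous mixed gradient term and the troublesome $u^p|\nabla v|^2$ term disappear \emph{simultaneously}, leaving a clean dissipation plus a tame $L^p$-type source. Verifying that \eqref{l6.1} evaluated at $K=\chi M_3$ is exactly the condition making this construction available, and carefully tracking every sign and prefactor through the integration by parts and Young's inequality, is the delicate part; the subsequent Gagliardo--Nirenberg absorption and ODE argument are then routine.
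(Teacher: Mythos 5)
Your proposal is correct and follows essentially the same route as the paper: the same weighted energy $\int_\Omega u^p\phi(\chi v)$ built on Lemma \ref{l6}, the same exact cancellation of the $u^p|\nabla v|^2$ term through identity \eqref{l6.4} (the paper phrases it as completing a square, you as a Young inequality with splitting constant $p(p-1)(1-\epsilon)$ — these are algebraically identical), discarding the nonpositive $-\beta v$ and $-\gamma uv$ contributions, and the same Gagliardo--Nirenberg absorption plus linear ODI to conclude. No gaps; the two arguments differ only in presentation.
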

	\begin{proof}
		Set $\bvo=\chi\bv$ to the function $\phi(x)$ defined in Lemma \ref{l6}. Multiplying by $\upo$, $p>1$, the first equation of \eqref{1} and integrating over $\Omega$ we get
		\begin{align*}
			\frac{1}{p}\dt\ints\bup\phvo=&\ints\upo\bu_t\phvo+\frac{1}{p}\ints\bup\povo\bvo_t\\
			=&\ints\upo\phvo\Big(\lu-\nabla\cdot(\bu \gvo)\Big)\\
			&+\frac{1}{p}\ints\bup\povo\Big(\Delta\bvo+\alpha\chi\bw-\beta\bvo-\gamma u\bvo\Big)   \quad \text{for all $t\in(0,\tmax)$}.
		\end{align*}
		Integration by parts leads to
		\begin{align*}
			\frac{1}{p}\dt\ints\bup&\phvo=-(p-1)\ints\bu^{p-2}\phvo\mgu^2-\ints\upo\povo\gu\cdot\gvo \\
			&+(p-1)\ints\upo\phvo\gu\gvo+\ints\bup\povo\mgvo^2-\ints\upo\povo\gu\cdot\gvo \\
			&-\frac{1}{p}\ints\bup\ptvo\mgvo^2
			+\frac{\alpha\chi}{p}\ints\bup\povo\bw-\frac{1}{p}\ints u^p \povo\left(\beta\bvo+\gamma u\bvo\right),
		\end{align*}
		for all $t\in(0,\tmax)$. Combining the terms, one can get on $(0,\tmax)$
		\begin{equation}\label{l4.2} 
			\begin{split}
				\frac{1}{p}\dt\ints\bup\phvo+&(p-1)\epsilon\ints\bu^{p-2}\phvo\mgu^2=-(p-1)(1-\epsilon)\ints\bu^{p-2}\phvo\mgu^2 \\
				&+\ints\left((p-1)\phvo-2 \povo\right)\upo\gu\cdot\gvo +\frac{\alpha\chi}{p}\ints\bup\povo\bw  \\
				&-\ints\left(\frac{1}{p}\ptvo-\povo\right)\bup\mgvo^2 -\frac{1}{p}\ints u^p\povo\Big(\beta\bvo+\gamma u\bvo\Big).
			\end{split} 
		\end{equation} 
		We have by the Gagliardo-Nirenberg and the Young inequalities, thanks to Lemma \ref{l2},
		\begin{equation}
			\begin{split}
				c_1\ints\bup&=c_1\left\|\bu^{\frac{p}{2}}\right\|^2_{\lts}\leq c_2\left(\left\|\nabla\bu^{\frac{p}{2}}\right\|^{2r_0}_{\lts}\,\,\left\|\bu^{\frac{p}{2}}\right\|^{2(1-r_0)}_{\ltps}+\left\|\bu^{\frac{p}{2}}\right\|^2_{\ltps}\right) \\
				&\leq \frac{(p-1)\epsilon}{2}\ints\bu^{p-2}\mgu^2+c_{3} \quad \text{for all $t\in(0,\tmax)$},
			\end{split}
			\label{eq:GNpYuP}
		\end{equation}
		where $r_0=\frac{\frac{p}{2}-\frac{1}{2}}{\frac{p}{2}+\frac{1}{n}-\frac{1}{2}}\in(0,1)$, $c_3$ is a positive constant and some $\epsilon\in(0,1)$.
		
		In view of \eqref{l6.2}, we have $\phvo\geq 1$, so that
		\begin{equation} \label{l4.3} 
			\begin{split}
				c_1\ints\bup\leq \frac{(p-1)\epsilon}{2}\ints\bu^{p-2} \phvo\mgu^2+c_{3}, \quad \text{on $(0,\tmax)$}.
			\end{split} 
		\end{equation} 
		From \eqref{l4.3}, we can estimate on $(0,\tmax)$, again with Lemma \ref{l2}
		\begin{equation}\label{l4.4} \begin{split}
				\frac{\alpha\chi}{p}\ints\bup\povo\bw&\leq \frac{\alpha\chi}{p}\nrw_{\lis}\lVert\phi'\rVert_{L^{\infty}([0,K])}\ints\bup \\
				&\leq \frac{(p-1)\epsilon}{2}\ints\bu^{p-2}\phvo\mgu^2+c_{4}.
		\end{split} \end{equation} 
		By substituting \eqref{l4.4} in \eqref{l4.2} we obtain, for every $t\in(0,\tmax)$,
		\begin{equation} \label{l4.6} \begin{split}
				\frac{1}{p}\dt\ints\bup\phvo+\frac{(p-1)\epsilon}{2}\ints\bu^{p-2}\phvo&\mgu^2\leq \:-(p-1)(1-\epsilon)\ints\bu^{p-2}\phvo\mgu^2 \\
				&+\ints\left((p-1)\phvo-2\povo\right)\upo\gu\cdot\gvo \\
				&-\ints\left(\frac{1}{p}\ptvo-\povo\right)\bup\mgvo^2 +c_4.
		\end{split} \end{equation} 
		Again, using \eqref{l4.3}, we have
		\begin{equation} \begin{split}
				\ints\bup\phvo\leq\phi\left(\chi\nrv_{\lis}\right)\ints\bup\leq \frac{(p-1)\epsilon}{2}\ints\bu^{p-2}\phvo\mgu^2+c_{5}, \quad \text{on $(0,\tmax)$}. \label{l4.7} 
		\end{split} \end{equation} 
		By plugging \eqref{l4.7} in \eqref{l4.6} we can write
		\begin{align*}
			\frac{1}{p}\dt\ints\bup&\phvo+\ints\bup\phvo\leq-(p-1)(1-\epsilon)\ints\bu^{p-2}\phvo\mgu^2\\
			&+\ints\left\lvert(p-1)\phvo-2\povo\right\rvert\upo\mgu\mgvo-\ints\left(\frac{1}{p}\ptvo-\povo\right)\bup\mgvo^2\\
			&+c_6\\
			\leq&\: -\ints\left(\sqrt{(p-1)(1-\epsilon)\phvo}\bu^{\frac{p-2}{2}}\mgu\right)^2
			-\ints\left(\sqrt{\frac{1}{p}\ptvo-\povo}\bu^\frac{p}{2}\mgvo\right)^2\\
			&+\ints\left\lvert(p-1)\phvo-2\povo\right\rvert\upo\mgu\mgvo+c_6\\
			\leq&\: -\ints\left[\left(\sqrt{(p-1)(1-\epsilon)\phvo}\bu^{\frac{p-2}{2}}\mgu\right)^2+\left(\sqrt{\frac{1}{p}\ptvo-\povo}\bu^\frac{p}{2}\mgvo\right)^2\right.\\
			&-\left.2\sqrt{(p-1)(1-\epsilon)\phvo\left(\frac{1}{p}\ptvo-\povo\right)}\bu^{\frac{p-2}{2}+\frac{p}{2}}\mgu\mgvo\right]\\
			&-\ints2\sqrt{(p-1)(1-\epsilon)\phvo\left(\frac{1}{p}\ptvo-\povo\right)}\bu^{\frac{p-2}{2}+\frac{p}{2}}\mgu\mgvo\\
			&+\ints\left\lvert(p-1)\phvo-2\povo\right\rvert\upo\mgu\mgvo+c_6, \quad \text{for all $t\in(0,\tmax)$}.
		\end{align*}
		Collecting the terms in the above inequality, one gets
		\begin{align*}
			\frac{1}{p}\dt\ints\bup&\phvo+\ints\bup\phvo\leq\\ &-\ints\left[\sqrt{(p-1)(1-\epsilon)\phvo}\bu^{\frac{p-2}{2}}\mgu-\sqrt{\frac{1}{p}\ptvo-\povo}\bu^\frac{p}{2}\mgvo\right]^2\\
			&+\ints\left(\left\lvert(p-1)\phvo-2\povo\right\rvert-2\sqrt{(p-1)(1-\epsilon)\phvo\left(\frac{1}{p}\ptvo-\povo\right)}\right)\\
			&\times\bu^{p-1}\mgu\mgvo+c_6  \\
			\leq&\: -\ints\left[\sqrt{(p-1)(1-\epsilon)\phvo}\bu^{\frac{p-2}{2}}\mgu-\sqrt{\frac{1}{p}\ptvo-\povo}\bu^\frac{p}{2}\mgvo\right]^2\\
			&+\ints\varphi\bu^{p-1}\mgu\mgvo+c_6, \quad \text{for every $t\in(0,\tmax)$},
		\end{align*}
		where $\varphi=\left\lvert(p-1)\phvo-2\povo\right\rvert-2 \sqrt{ (p-1)(1-\epsilon)\phvo\left(\frac{1}{p}\ptvo-\povo\right)} = 0$ from \eqref{l6.4}, for all $0\leq \bvo\leq\chi M_3$. As a consequence, we have that for some $c_7>0$
		\begin{equation*}
			\frac{1}{p}\dt\ints\bup\phvo+\ints\bup\phvo\leq c_7 \quad \text{for all} \, t \in (0,\tmax),
		\end{equation*}
		and by ODE arguments, we can conclude that
		\begin{equation*} 
			\ints\bup \leq c_8, \hspace*{2cm} \text{for all } t\in(0, \tmax).
		\end{equation*} 
		This completes the proof.
	\end{proof}

	\subsection{The case \texorpdfstring{$\tau = 0$}{\textepsilon=0}} \label{subsec:tau0}
	In this section we will study problem \eqref{1} in the case were the second and third equations are elliptic, i.e. $\tau=0$. For simplicity we will explicitly make mention to the following model
	\begin{equation}\label{systemtau0}
		\begin{dcases}
			\bu_t= \Delta \bu-\chi \nabla\cdot(\bu \gv),\hspace*{0.5cm} &\text{in} \; \Omega\times(0,\tmax),\\
			0= \Delta \bv+\alpha\bw-\beta\bv-\gamma u\bv, &\text{in} \; \Omega\times(0,\tmax),\\
			0= \Delta \bw-\delta u\bw+\mu\bw(1-\bw), &\text{in} \; \Omega\times(0,\tmax),\\
			\frac{\partial \bu}{\partial\nu}=\frac{\partial \bv}{\partial\nu}=\frac{\partial \bw}{\partial\nu}=0, &\text{on} \; \partial\Omega\times(0,\tmax),\\
			\bu(x,0)=\bu_0(x), &x\in\overline{\Omega}.
		\end{dcases}
	\end{equation}
	In the specific let us establish this result.
	\begin{lemma}\label{Alex1}
		For every $p>1$ there exists $C_2>0$ such that the local solution $(u,v,w)$ of \eqref{systemtau0} satisfies 
		\begin{align*}
			\nut_{\lps}\leq C_2, \qquad \text{for all $t\in(0,\tmax)$}.
		\end{align*}
		\begin{proof}
			We begin with the differentiation of the functional $\Phi(t)=\into u^p$ for which we apply the divergence theorem and obtain
			\begin{equation*}
				\begin{split}
					\Phi'(t)=&p\into \bu^{p-1}\Delta \bu - p\chi\into \bu^{p-1}\nabla\cdot \left( \bu\nabla \bv\right)\\
					=&-p(p-1)\into \bu^{p-2}\left|\nabla\bu \right|^2+p(p-1)\chi\into\bu^{p-1}\nabla\bu\cdot\nabla\bv\\
					=&-\frac{4(p-1)}{p}\into \left|\nabla u^{\frac p2}\right|^2-(p-1)\chi\into u^p\Delta v \quad \text{on $(0,\tmax)$.}
				\end{split}
			\end{equation*}
			We now make use of the second equation in conjunction with the fact that $u,v$ and $w$ are positive and $w$ is bounded as in \eqref{l2.2}, so to write
			\begin{equation}
				\begin{split}
					\Phi'(t)=&-\frac{4(p-1)}{p}\into \left|\nabla u^{\frac p2}\right|^2-(p-1)\chi\into u^p\left(-\alpha w +\beta v +\gamma uv \right)\\\label{phiprimo}
					\leq&-\frac{4(p-1)}{p}\into \left|\nabla u^{\frac p2}\right|^2+\alpha(p-1)\chi M_2\into u^p \quad \text{on $(0,\tmax)$.}
				\end{split}
			\end{equation}
			Exactly as in \eqref{eq:GNpYuP}, we can say that for any $c_1>0$ there is a proper $c_2>0$ such that 
			\begin{equation}\label{gn}
				c_1\into u^p\leq\frac{2(p-1)}{p} \into \lvert\nabla u^{\frac p2} \rvert^2 +c_2 \quad \text{on $(0,\tmax)$}.
			\end{equation}
			In this way \eqref{phiprimo} and \eqref{gn} lead to this initial problem:
			\begin{equation*}
				\begin{dcases}
					\Phi'(t)\leq 2 c_2-c_1\Phi(t) & \text{on $(0,\tmax)$},\\
					\Phi(0)=\into u_0^p.
				\end{dcases}
			\end{equation*}
			Naturally, the above problem is solvable and $\Phi(t)\leq \max{ \left\{\into u_0^p, \frac{2c_2}{c_1}\right\}}$.
		\end{proof}
	\end{lemma}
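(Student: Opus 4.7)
The plan is to derive an autonomous linear differential inequality for the functional $\Phi(t) := \into u^p$ and conclude by ODE comparison. First, I would multiply the first equation of \eqref{systemtau0} by $p u^{p-1}$, integrate over $\Omega$, and apply the Neumann boundary conditions together with integration by parts (twice — once on the diffusive term and once on the drift) to get
\[
\Phi'(t) = -\frac{4(p-1)}{p} \into \big|\nabla u^{p/2}\big|^2 - (p-1)\chi \into u^p \Delta v, \qquad t \in (0,\tmax).
\]

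The key observation — and the reason no smallness hypothesis is required when $\tau=0$ — is that the second equation in \eqref{systemtau0} provides the pointwise \emph{identity} $\Delta v = \beta v + \gamma u v - \alpha w$, rather than merely a semigroup representation. Substituting this, the two terms $-(p-1)\chi\beta\into u^p v$ and $-(p-1)\chi\gamma\into u^{p+1}v$ are nonpositive by the nonnegativity of $u$ and $v$ guaranteed by Lemma \ref{l1}, and therefore can be discarded. The remaining production term is controlled by the $L^\infty$ bound on $w$ from Lemma \ref{l2}, yielding
\[
\Phi'(t) \leq -\frac{4(p-1)}{p} \into \big|\nabla u^{p/2}\big|^2 + \alpha(p-1)\chi M_2 \into u^p.
\]

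Next I would invoke a Gagliardo--Nirenberg interpolation of exactly the type recorded in \eqref{eq:GNpYuP}, coupled with Young's inequality and the mass conservation $\lVert u(\cdot,t)\rVert_{\los}=M_1$ from \eqref{l2.1}, to bound $c_1\into u^p \leq \frac{2(p-1)}{p}\into |\nabla u^{p/2}|^2 + c_2$ for suitable constants. Choosing $c_1$ larger than $\alpha(p-1)\chi M_2$, this absorbs the bad gradient-free term with room to spare, producing an autonomous ODI $\Phi'(t) + c_3 \Phi(t) \leq c_4$ on $(0,\tmax)$. Standard ODE comparison then yields $\Phi(t) \leq \max\bigl\{\Phi(0),\, c_4/c_3\bigr\}$, which is the claim.

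The main point — more of a structural remark than a genuine obstacle — is precisely the sign gift provided by the elliptic $v$-equation: in the parabolic case $\tau=1$ one cannot replace $\Delta v$ by $\beta v + \gamma u v - \alpha w$ pointwise, and the chemotactic term must be tamed by the intricate weight $\phi(\chi v)$ constructed in Lemma \ref{l6}, which forces the smallness condition \eqref{ConditionBoundednessTh}. For $\tau=0$ the dissipative contributions $-(p-1)\chi(\beta v + \gamma u v)\int u^p \leq 0$ are automatic, so any $p>1$ (and any size of data and parameters) is admissible.
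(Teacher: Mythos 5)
Your proposal is correct and follows essentially the same route as the paper's proof: differentiating $\Phi(t)=\into u^p$, substituting the elliptic identity $\Delta v=\beta v+\gamma uv-\alpha w$ to discard the nonpositive terms and retain only $\alpha(p-1)\chi M_2\into u^p$, then absorbing this via the Gagliardo--Nirenberg estimate \eqref{eq:GNpYuP} into an autonomous ODI and concluding by comparison. Your closing remark on why the elliptic structure removes any smallness condition also matches the paper's own commentary, so there is nothing to correct.
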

	\subsubsection*{Proof of Theorem \ref{t1}.}
	For the case $\tau = 0$, we directly consider Lemmas \ref{Alex1} and \ref{l5}. For $\tau = 1$, with the aim of exploiting Lemma \ref{l4} (and successively again Lemma \ref{l5}), it is sufficient to 
	verify \eqref{l6.1} with $K:= \chi M_3<\sqrt{\frac{2}{n}}\pi$. We can choose
	\begin{align*}
		\epsilon=\frac{\pi^2-\frac{n}{2}K^2}{2\left(\pi^2+(\frac{n}{2})^2K^2\right)}\in (0, 1)
	\end{align*}
	such that
	\begin{align*}
		K=\frac{2}{\sqrt{\frac{n}{2}}}\sqrt{\frac{1-2\epsilon}{1+2\epsilon\frac{n}{2}}}\frac{\pi}{2}<\frac{2}{\sqrt{\frac{n}{2}}}\sqrt{\frac{1-\epsilon}{1+\epsilon\frac{n}{2}}}\frac{\pi}{2},
	\end{align*}
	for some $p>\frac{n}{2}$. Then we have
	\begin{align*}
		K<\frac{2}{\sqrt{p}}\sqrt{\frac{1-\epsilon}{1+p\epsilon}}\frac{\pi}{2}<\frac{2}{\sqrt{p}}\sqrt{\frac{1-\epsilon}{1+p\epsilon}}\left(\frac{\pi}{2}+\arctan\sqrt{\frac{p}{1+(p-1)\epsilon-p\epsilon^2}}\epsilon\right). \hfill \qed 
	\end{align*}

	\section{Lower bounds for the maximal existence time of solutions in \texorpdfstring{$\mathbb{R}^{3}$}{R3}}
	\label{sec:blowupTmax}
	This section provides the lower bounds for the maximal existence time of solutions in $\mathbb{R}^3$. In this direction, we have to rely on  what follows.
	\begin{lemma}
		For $n=3$ and $\tau\in\{0,1\}$, let $\Omega \subset\mathbb{R}^3$ be a bounded convex domain and let $(u,v,w)$ be a solution to model \eqref{1}  blowing-up at finite time $\tmax$, in the sense that $\limsup_{t \to \tmax} \lVert u(\cdot,t) \rVert_{L^\infty(\Omega)} = +\infty$.
		Then, there exist computable constants $\mathcal{A}_\tau, \mathcal{B}_\tau, \mathcal{C}_\tau$ such that
		\begin{equation} \label{eq:t3demo}
			\Psi'_\tau(t) \leq \mathcal{A}_\tau \Psi_\tau(t)^3 + \mathcal{B}_\tau \Psi_\tau(t)^\frac32 + \mathcal{C}_\tau \Psi_\tau(t)^\tau \quad \text{on} \; (0,\tmax),
		\end{equation}
		where $\Psi_\tau$ is defined in \eqref{DefFunctional}.
		
		\begin{proof}
			We will distinguish the fully parabolic case, i.e., $\tau=1$, and the parabolic-elliptic-elliptic case, i.e., $\tau=0$.
			\subsection*{The case \texorpdfstring{$\tau = 1$}{\textepsilon=1}}  
			We have to deal with the following  energy function 
			\begin{equation} \label{eq:blowupFunc}
				\Psi_1(t) := \into u^2 + \into \lvert \nabla v \rvert^4  + \into \lvert \nabla w \rvert^2 \qquad \text{for all}\; t\in(0, \tmax). 
			\end{equation}
			Let us denote the integrals in \eqref{eq:blowupFunc} by $I_1(t)$, $I_2(t)$, and $I_3(t)$ respectively. Let us then compute their derivatives one at a time.
			
			For the first integral $I_1(t)$ we have thanks to the Young inequality
			\begin{equation}\label{eq:1Blow-upEllittico}
				\begin{split}
					\frac{d I_1(t)}{d t} &= 2\into u \left[ \Delta \bu-\chi \nabla\cdot(\bu \gv) \right] = - 2\into \lvert \gu \rvert^2 + 2\chi \into \gu \cdot u\gv  \\
					&\leq -\frac32 \into \absgu^2 + 2\chi^2 \into u^2 \absgv^2 \quad 
					\text{on} \; (0,\tmax).
				\end{split}
			\end{equation}
			
			The Young inequality applied again to $u^2 \absgv^2$ and successively  the inequality in \eqref{ineq:Payne}, with $\varepsilon = \frac{1}{4 A_3 \chi^2}$ for $\into u^3$ and  $\varepsilon = \frac{27}{16 A_3 \chi^2}$ for $\into \absgv^6$, provide for all $t\in (0,\tmax)$
			\begin{equation} \label{eq:dI1}
				\begin{split}
					\frac{d I_1(t)}{d t} &\leq -\frac{3}{2} \into \absgu^2 + 2\chi^2 \into u^3 +  \frac{8\chi^2}{27} \into \absgv^6 \\
					&\leq -\into \absgu^2 + 2A_1 \chi^2 \left(\into u^2\right)^\frac{3}{2} 
					+ 2^7 A_2 A_3^3 \chi^8 \left(\into u^2\right)^3 \\
					&\quad + \frac{8A_1\chi^2}{27} \left(\into \absgv^4\right)^\frac{3}{2} 
					+ \frac{2^{15}A_2 A_3^3\chi^8}{3^{12}} \left(\into \absgv^4\right)^3
					+ 2 \into \absgv^2 \lvert D^2 v\rvert^2.
				\end{split}
			\end{equation} 
			In the last step we invoked as well the inequality $\lvert\nabla \absgv^2\rvert^2 \leq 4 \absgv^2 \lvert D^2 v\rvert^2$, applied to the integral term $\frac{1}{2}\int_\Omega |\nabla|\nabla  v|^2|^2$ .
			
			With the second term $I_2(t)$ we obtain, once again by relying on the divergence theorem and moreover by virtue of the identity $\Delta (\absgv^2) = 2\gv \cdot \nabla\lv +2 \lvert D^2 v\rvert^2$, for $t\in(0,\tmax)$:
			\begin{equation*}
				\begin{split}
					\frac{d I_2(t)}{d t} &= 4 \into \absgv^2 \gv \cdot \left[ \nabla \lv  + \alpha \gw - \beta \gv - \gamma v\gu - \gamma u\gv \right] \\
					&=  4 \into \absgv^2 \gv \cdot \nabla\lv + 4\alpha \into \absgv^2 \gv \cdot \gw - 4 \beta \into \absgv^4 \\ 
					&\quad - 4 \gamma \into \absgv^2 v\gv \cdot \gu- 4 \gamma \into \absgv^4 u \\
					&= 2 \into \absgv^2 \Delta (\absgv^2) - 4 \into \absgv^2 \lvert D^2 v\rvert^2 + 4\alpha \into \absgv^2 \gv \cdot \gw  - 4 \beta \into \absgv^4\\ 
					&\quad - 4 \gamma \into \absgv^2 v\gv \cdot \gu - 4 \gamma \into \absgv^4 u\\
					&= 2 \intbo \absgv^2 \nabla (\absgv^2)\cdot \nu  - 2\into \lvert \nabla \absgv^2 \rvert^2 - 4 \into \absgv^2 \lvert D^2 v\rvert^2 \\ 
					&\quad + 4\alpha \into \absgv^2 \gv \cdot \gw - 4 \beta \into \absgv^4 - 4 \gamma \into \absgv^2 v\gv \cdot \gu - 4 \gamma \into \absgv^4 u.
				\end{split}
			\end{equation*}
			In turn, the inequality $\nabla (\absgv^2)\cdot \nu \leq 0$ on $\partial \Omega$ proved in \cite{TaoWinkler}, after ignoring nonpositive terms, leads to
			\begin{equation*}
				\begin{split}
					\frac{d I_2(t)}{d t} &\leq - 2\into \lvert \nabla \absgv^2 \rvert^2 - 4 \into \absgv^2 \lvert D^2 v\rvert^2  + 4\alpha \into \absgv^2 \gv \cdot \gw\\ 
					&\quad - 4 \gamma \into \absgv^2 v\gv \cdot \gu \\
					&\leq - 2\into \lvert \nabla \absgv^2 \rvert^2 - 4 \into \absgv^2 \lvert D^2 v\rvert^2 + 4\left(\alpha + 4(\gamma M_3)^2  \right) \into \absgv^6 \\
					&\quad + \alpha \into \absgw^2 + \frac{1}{4} \into \absgu^2,
					\quad \text{with} \; t \in (0,\tmax)
				\end{split}
			\end{equation*}
			having exploited the Young inequalities, as well as $\lVert v \rVert_{L^\infty(\Omega)}\leq M_3$ (recall \eqref{l2.3}).
			
			The inequality in \eqref{ineq:Payne}, with the choice $\varepsilon = \frac{5}{8 A_3 \left(\alpha + 4 (\gamma M_3)^2\right)}$, conduces to
			\begin{equation} \label{eq:dI2}
				\begin{split}
					\frac{d I_2(t)}{d t} &\leq  - 4 \into \absgv^2 \lvert D^2 v\rvert^2 
					+ \alpha \into \absgw^2 + \frac{1}{4} \into \absgu^2 + \frac12 \into \lvert\nabla \absgv^2\rvert^2 \\
					&\quad  + 4\left(\alpha + 4(\gamma M_3)^2 \right) A_1 \left(\into \absgv^4\right)^\frac{3}{2}  + \frac{2^{11} A_2 A_3^3 \left(\alpha + 4(\gamma M_3)^2 \right)^4}{5^3} \left(\into \absgv^4\right)^3
					\\
					&\leq - 2 \into \absgv^2 \lvert D^2 v\rvert^2 
					+ \alpha \into \absgw^2 + \frac{1}{4} \into \absgu^2\\
					&\quad  + 4\left(\alpha + 4(\gamma M_3)^2 \right) A_1 \left(\into \absgv^4\right)^\frac{3}{2}  + \frac{2^{11} A_2 A_3^3 \left(\alpha + 4(\gamma M_3)^2 \right)^4}{5^3} \left(\into \absgv^4\right)^3
				\end{split}
			\end{equation}
			on $(0,\tmax)$.
			
			\medskip
			With analogous steps, last integral $I_3(t)$ leads, for $t\in (0,\tmax)$, to 
			\begin{equation} \label{eq:dI3}
				\begin{split}
					\frac{d I_3(t)}{d t} &= 2 \into \gw \cdot \left[ \nabla \lw - \delta w\gu - \delta u\gw +  \mu \gw - 2 \mu w\gw \right] \\
					&= 2 \into \gw \cdot \nabla \lw - 2\delta \into w\gw \cdot \gu - 2\delta \into u \absgw^2 + 2 \mu \into \absgw^2 \\
					&\quad - 4 \mu \into w\absgw^2 \\
					&= \into \Delta (\absgw^2)  - 2\into \lvert D^2 w \rvert^2 - 2\delta \into w\gw \cdot \gu - 2\delta \into u \absgw^2 \\
					&\quad + 2 \mu \into \absgw^2 - 4 \mu \into w\absgw^2 \\
					&\leq - 2\delta \into w\gw \cdot \gu + 2 \mu \into \absgw^2 
					\leq 2\delta M_2 \into \absgw \absgu + 2 \mu \into \absgw^2 \\
					&\leq 2\left(2 (\delta M_2)^2 +  \mu\right) \into \absgw^2 + \frac{1}{4} \into \absgu^2,
				\end{split}
			\end{equation}
			recalling that $\lVert w \rVert_{L^\infty(\Omega)}\leq M_2$ and after having used again the Young inequality.
			
			By combining \eqref{eq:dI1}, \eqref{eq:dI2}, and \eqref{eq:dI3}, we can conclude for $t \in (0,\tmax)$
			\begin{equation*} 
				\begin{split}
					\Psi_1'(t) &\leq \left(\alpha + 2\left(2 (\delta M_2)^2 +  \mu\right)\right) \into \absgw^2 
					+ 2A_1 \chi^2 \left(\into u^2\right)^\frac{3}{2} \\
					&\quad + 4A_1 \left(\frac{2\chi^2}{27} + \left(\alpha + 4(\gamma M_3)^2 \right) \right) \left(\into \absgv^4\right)^\frac{3}{2} 
					+ 2^7 A_2 A_3^3 \chi^8 \left(\into u^2\right)^3 \\
					&\quad + 2^{11} A_2 A_3^3 \left(\frac{2^{4} \chi^8}{3^{12}} + \frac{\left(\alpha + 4(\gamma M_3)^2 \right)^4}{5^3}\right)  \left(\into \absgv^4\right)^3  \\
					&\leq \mathcal{A}_1 \Psi_1(t)^3 + \mathcal{B}_1 \Psi_1(t)^\frac32 + \mathcal{C}_1 \Psi_1(t),
				\end{split}
			\end{equation*}
			with 
			\begin{equation*} 
				\begin{split}
					\mathcal{A}_1 &:= 2^7 A_2 A_3^3\chi^8 \max \left\{ 1 \; , \; \frac{2^{8}}{3^{12}} + \frac{2^{4}  \left(\alpha + 4(\gamma M_3)^2 \right)^4}{5^3 \chi^8} \right\},  \\
					\mathcal{B}_1 &:=  2A_1 \chi^2 \max \left\{1 \; , \; \frac{4}{27} + \frac{2\left(\alpha + 4(\gamma M_3)^2 \right)}{\chi^2} \right\}, \\	
					\mathcal{C}_1 &:= \alpha + 4 (\delta M_2)^2 + 2 \mu.
				\end{split}
			\end{equation*}
			
			\subsection*{The case \texorpdfstring{$\tau = 0$}{\textepsilon=0}} 
			Naturally, this simplification makes that of the three expressions $I_1$, $I_2$ and $I_3$ in \eqref{eq:blowupFunc}  only the first one takes part in the analysis, and in particular 
			the energy function is reduced into
			\begin{equation*} 
				\Psi_0(t) := \into u^2\quad \text{for all} \; t \in (0,\tmax).
			\end{equation*}
			Subsequently,  from  \eqref{eq:1Blow-upEllittico} we can write, by applying twice the divergence theorem,   
			\begin{equation*} 
				\frac{d I_1(t)}{d t} = 2\into u \left[ \Delta \bu-\chi \nabla\cdot(\bu \gv) \right] = - 2\into \lvert \gu \rvert^2 -\chi  \into u^2 \Delta v,  \quad \textrm{on} \quad (0,\tmax)
			\end{equation*}
			which thanks to the second equation in system \eqref{1} turns into
			\begin{equation*} 
				\begin{split}
					\Psi_0'(t) &= -2 \into \absgu^2 + \chi \alpha \into u^2 w - \chi \beta \into u^2 v - \chi \gamma \into u^3 v \\
					&\leq -2 \into \absgu^2 + \chi \alpha \into u^2 w \\
					&\leq -2 \into \absgu^2 + \into u^3 + \frac{4}{27 \chi \alpha} M_2^3 \lvert \Omega \rvert \quad  \textrm{for all } \; t \in (0,\tmax);
				\end{split}
			\end{equation*}
			we observe that in the last step we used the Young inequality and the estimate on $w$ given in \eqref{l2.2}.
			
			Since the case $\tau=1$ has been deeply discussed,  we understand that we may omit for this situation some  details. In particular, by relying on the inequality \eqref{ineq:Payne}, the integral $\int_\Omega u^3$ can be properly estimated in terms of $\int_\Omega \lvert\nabla u \rvert^2$, $\left(\into u^2\right)^3$ and $\left(\into u^2\right)^\frac{3}{2}$. Eventually, we obtain for some $\mathcal{A}_0$, $\mathcal{B}_0$ and $\mathcal{C}_0$ positive this inequality 
			\begin{equation*} 
				\Psi_0'(t) \leq \mathcal{A}_0 \Psi_0(t)^3  + \mathcal{B}_0 \Psi_0(t)^\frac32 + \mathcal{C}_0 \quad \textrm{on} \quad (0,\tmax). \qedhere
			\end{equation*} 	
		\end{proof}
	\end{lemma}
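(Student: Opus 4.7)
The plan is to reduce Theorem \ref{t3} to a differential inequality of the form
\[
\Psi'_\tau(t) \leq \mathcal{A}_\tau \Psi_\tau(t)^3 + \mathcal{B}_\tau \Psi_\tau(t)^{3/2} + \mathcal{C}_\tau \Psi_\tau(t)^\tau \qquad \text{on } (0,\tmax),
\]
exactly as displayed in \eqref{eq:t3demo}. Once this superlinear ODI is in hand, separation of variables on $(0,\tmax)$ yields
\[
\int_{\Psi_\tau(0)}^{\Psi_\tau(t)} \frac{d\sigma}{\mathcal{A}_\tau \sigma^3 + \mathcal{B}_\tau \sigma^{3/2} + \mathcal{C}_\tau \sigma^\tau} \leq t,
\]
and Remark \ref{fromlinfinitotolpblowup} guarantees $\Psi_\tau(t)\to +\infty$ as $t \to \tmax$, hence the claimed lower bound on $\tmax$ follows by letting $t \to \tmax$.

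First I would handle the case $\tau=1$. Differentiating $I_1:=\int_\Omega u^2$ using the $u$-equation and integrating by parts gives $-2\int_\Omega |\nabla u|^2 + 2\chi\int_\Omega u\,\nabla u\cdot\nabla v$; a Young step absorbs a fraction of the dissipation and produces a $\chi^2\int_\Omega u^2 |\nabla v|^2$ term, which a further Young step splits into $\int_\Omega u^3$ and $\int_\Omega |\nabla v|^6$. Both cubic integrals are then controlled via the Payne-type inequality \eqref{ineq:Payne} (with carefully chosen $\varepsilon$'s that let the remaining $\int_\Omega |\nabla u|^2$ and $\int_\Omega |\nabla v|^2 |D^2 v|^2$ terms be absorbed later) in terms of $(\int u^2)^{3/2}$, $(\int u^2)^3$, $(\int |\nabla v|^4)^{3/2}$, $(\int |\nabla v|^4)^3$, plus a Hessian term. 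For $I_2:=\int_\Omega |\nabla v|^4$, I differentiate, use the $v$-equation, integrate by parts to convert $\int_\Omega |\nabla v|^2 \nabla v\cdot\nabla \Delta v$ into $-2\int_\Omega |\nabla |\nabla v|^2|^2 - 4\int_\Omega |\nabla v|^2 |D^2 v|^2$ plus a boundary integral, and crucially invoke convexity of $\Omega$ through the inequality $\nabla(|\nabla v|^2)\cdot\nu\leq 0$ on $\partial \Omega$ (as in \cite{TaoWinkler}) to discard the boundary term. The cross term $\int_\Omega |\nabla v|^2 v \,\nabla v \cdot \nabla u$ is handled via Young and the bound $\|v\|_{L^\infty}\leq M_3$ from Lemma \ref{l2}, producing $\int_\Omega |\nabla v|^6$ (again estimated by \eqref{ineq:Payne}) and a small fraction of $\int_\Omega |\nabla u|^2$. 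For $I_3:=\int_\Omega |\nabla w|^2$, an analogous computation using the $w$-equation, the bound $\|w\|_{L^\infty}\leq M_2$, and Young's inequality produces $\int_\Omega |\nabla w|^2$ and a small fraction of $\int_\Omega |\nabla u|^2$, with no need for Payne here since no cubic term arises. Summing $I_1'+I_2'+I_3'$, the negative dissipation terms $\int_\Omega |\nabla u|^2$ and $\int_\Omega |\nabla v|^2|D^2 v|^2$ absorb the error contributions, and what remains is bounded by $\mathcal{A}_1 \Psi_1^3 + \mathcal{B}_1 \Psi_1^{3/2} + \mathcal{C}_1 \Psi_1$ with constants of the form displayed in the excerpt.

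The case $\tau=0$ is easier. Here $\Psi_0 = \int_\Omega u^2$, and differentiating gives $-2\int_\Omega |\nabla u|^2 - \chi\int_\Omega u^2 \Delta v$; the elliptic second equation lets me replace $\Delta v$ by $-\alpha w + \beta v + \gamma u v$, drop the nonpositive contributions (using $v,u\geq 0$), and estimate $\chi\alpha \int_\Omega u^2 w \leq \int_\Omega u^3 + \text{const}$ via Young and $\|w\|_{L^\infty}\leq M_2$. One final application of \eqref{ineq:Payne} to $\int_\Omega u^3$ with an $\varepsilon$ that lets the $\int_\Omega |\nabla u|^2$ term cancel against the dissipation yields the desired inequality with a constant $\mathcal{C}_0$ (i.e.\ the $\Psi_0^\tau = \Psi_0^0 = 1$ term).

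The main obstacle is the bookkeeping in the $\tau=1$ case: multiple applications of Young's inequality with free parameters $\varepsilon$ must be tuned simultaneously so that (i) the two negative dissipation terms $\int_\Omega |\nabla u|^2$ and $\int_\Omega |\nabla v|^2 |D^2 v|^2$ suffice to absorb every non-Payne error produced across $I_1$, $I_2$, and $I_3$, and (ii) Payne's inequality is applied only to genuine $L^3$ norms where the resulting $\varepsilon\int_\Omega |\nabla f|^2$ contribution remains absorbable. The convexity hypothesis on $\Omega$ is essential, both for the boundary-term sign in $I_2'$ and to apply \eqref{ineq:Payne}; without it the whole scheme collapses.
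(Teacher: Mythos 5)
Your proposal is correct and follows essentially the same route as the paper's proof: the same testing of each equation, the same use of the identity $\Delta(\absgv^2)=2\gv\cdot\nabla\lv+2\lvert D^2v\rvert^2$, the convexity-based boundary sign $\nabla(\absgv^2)\cdot\nu\leq 0$, the Payne-type inequality \eqref{ineq:Payne} on $\into u^3$ and $\into\absgv^6$ with $\varepsilon$ tuned so that $\into\absgu^2$ and $\into\absgv^2\lvert D^2v\rvert^2$ absorb all non-Payne errors, the bounds $M_2$, $M_3$ from Lemma \ref{l2}, and the same simplified treatment (elliptic substitution for $\Delta v$, discarding nonpositive terms, Young, then Payne) in the case $\tau=0$. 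The only organizational difference is that you fold the final separation-of-variables step of Theorem \ref{t3} into the same argument, which the paper keeps as a separate short proof after the lemma.
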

	
	\subsubsection*{Proof of Theorem \ref{t3}.}
	Integrating \eqref{eq:t3demo} in $(0,\tmax)$, and taking into account that from Remark \ref{fromlinfinitotolpblowup} one has that $\lim_{t\to\tmax}{\Psi_\tau (t)}=+\infty$ we obtain \eqref{eq:t3} which completes the proof.
	\begin{remark}
		We understand, that a similar analysis derive in Theorem \ref{t3} can be carried out in domains included in $\mathbb{R}^n$ with $n\geq 4$. In this case, \eqref{ineq:Payne} cannot be employed and an alternative inequality is required. In this direction, one can invoke some appropriate Sobolev embedding, exactly in the spirit of \cite{AndDeng}.
	\end{remark}
	
	\section{A numerical simulation in a cube} \label{sec:numexamples}
	Far from wishing present a strong numerical analysis, in this section we dedicate to some 3-dimensional simulations. Indeed, if from the one hand, in two dimensions, we have recalled that no blow-up occurs, from the other, explosion at finite time in higher dimensions is an open problem. Despite that, some evidences may suggest blow-up in a cube; we understand that at least in a first level these are sufficient to motivate the estimate of the blow-up time derived in Theorem \ref{t3}.
	
	\quad The numerical simulation is conducted using the Finite Difference Method, drawing inspiration from the works \cite{ChertokKur, Chertoketal}. Specifically, we employ first-order central difference for the first derivatives, second-order central difference for the diffusion term (Crank--Nicolson scheme), and the Lax--Friedrichs scheme for the chemotactic term.
	
	\begin{figure}[h]
		\centering
		\subfloat[Lymphocyte: $u$ at time $t=5\times10^{-6}$.]{\includegraphics[width=5.5cm, height=5cm]{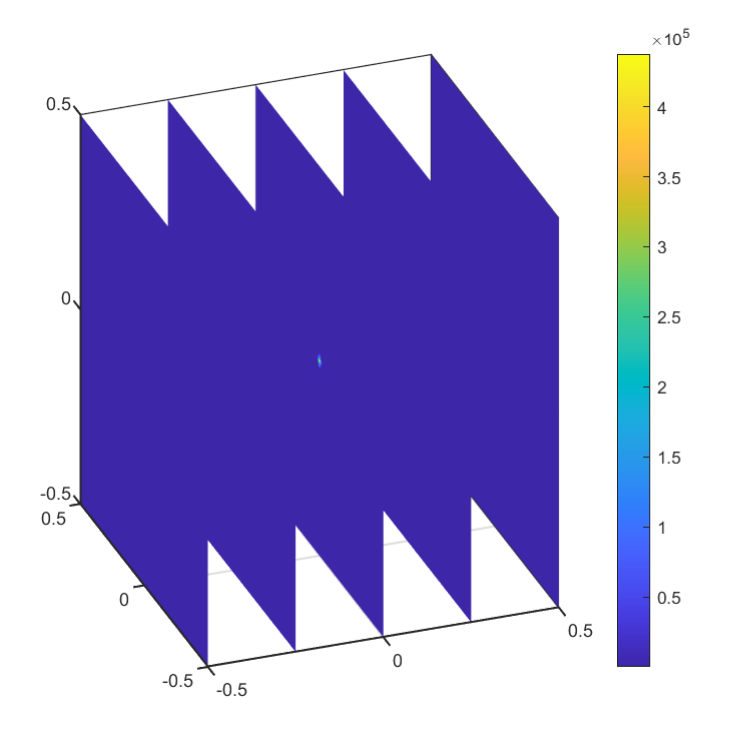}}
		\subfloat[Chemical concentration: $v$ at time $t=5\times10^{-6}$.]{\includegraphics[width=5.5cm, height=5cm]{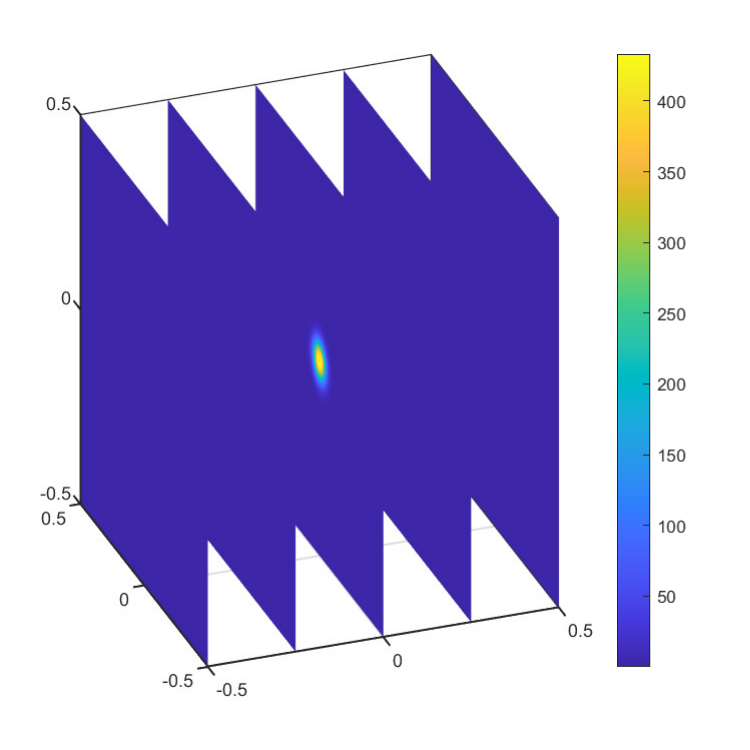}}
		\subfloat[Tumor cell: $w$ at time $t=5\times10^{-6}$.]{\includegraphics[width=5.5cm, height=5cm]{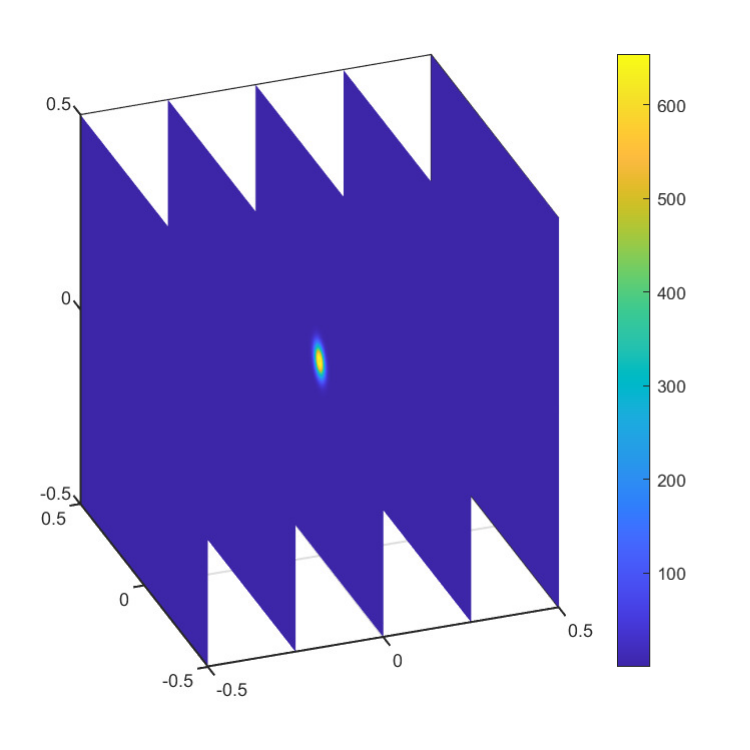}}\\
		\subfloat[Lymphocyte: $u$ at time $t=8\times10^{-6}$.]{\includegraphics[width=5.5cm, height=5cm]{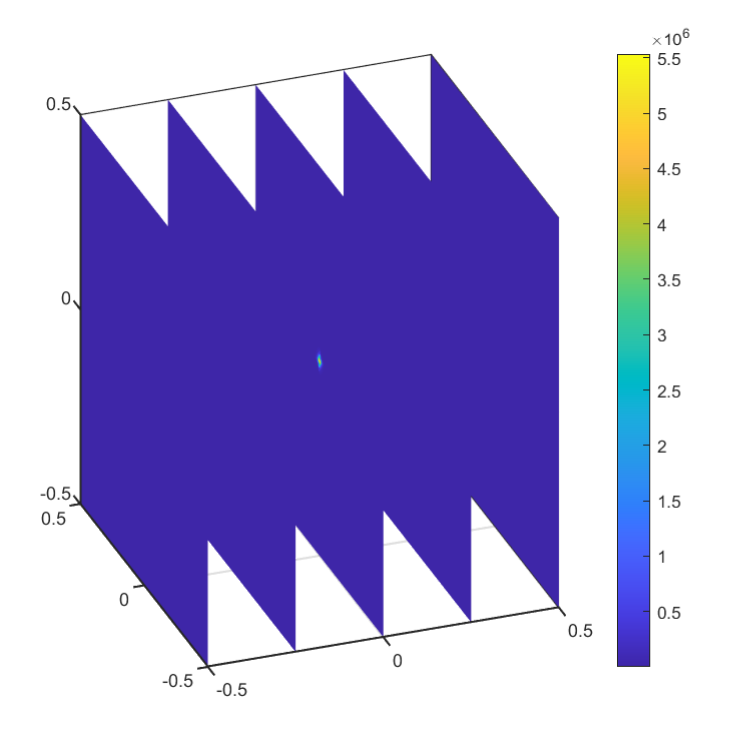}}
		\subfloat[Chemical concentration: $v$ at time $t=8\times10^{-6}$.]{\includegraphics[width=5.5cm, height=5cm]{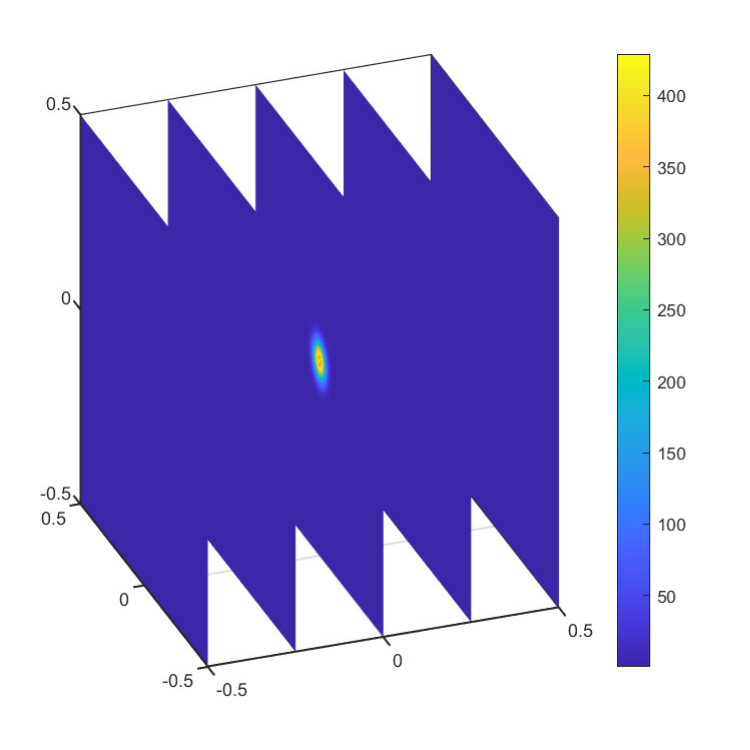}}
		\subfloat[Tumor cell: $w$ at time $t=8\times10^{-6}$.]{\includegraphics[width=5.5cm, height=5cm]{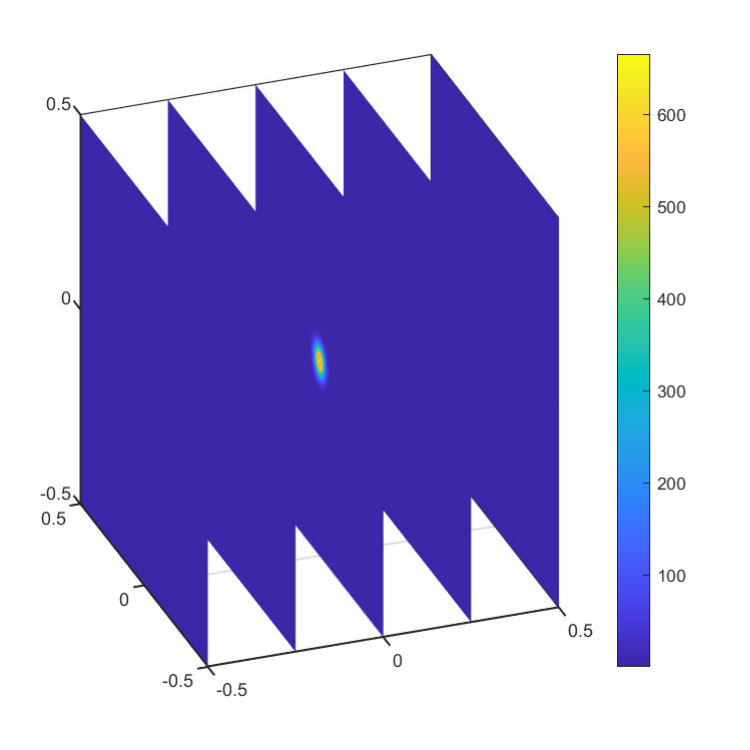}}
		\caption{Graphical representation of the evolution for $u,v,w$ at two different instants of time.}\label{fig.5.7.1}
	\end{figure}
	\def\w{.23\textwidth}
	\def\h{.21\textwidth}
	\begin{figure}[h]
		\centering
		\subfloat[Lymphocyte: $u$ for $t\in(0,\tmax)$.]{\label{u2}
			\begin{tikzpicture}
				\begin{axis}[%
					width=\w,height=\h,
					scale only axis,
					xmin=0, xmax=7e-06,
					xlabel={$t$},
					ymin=0, ymax=6000000,
					ylabel={$\Vert u(\cdot,t)\rVert_{L^\infty(\Omega)}$},
					]
					\addplot [color=mycolor1, line width=2.0pt, forget plot]
					table[row sep=crcr]{%
						0	3757.01044059917\\
						1.00024044513702e-06	14121.5786366928\\
						1.99954956769943e-06	51666.6227750583\\
						2.99979001283646e-06	172511.510269691\\
						4.00003045797348e-06	436894.140165122\\
						5.0002709031105e-06	269304.525383326\\
						5.99958002567291e-06	1282193.64323823\\
						6.99982047080994e-06	5524799.77536841\\
					};
				\end{axis}
		\end{tikzpicture}}
		\subfloat[Chemical concentration: $v$ for $t\in(0,\tmax)$.]{\label{v2}
			\begin{tikzpicture}
				\begin{axis}[%
					width=\w,height=\h,
					scale only axis,
					xmin=0,xmax=7e-06,
					xlabel={$t$},
					ymin=410,ymax=500,
					ylabel={$\Vert v(\cdot,t)\rVert_{L^\infty(\Omega)}$},
					]
					\addplot [color=mycolor1, line width=2.0pt, forget plot]
					table[row sep=crcr]{%
						0	498.068918645796\\
						9.99999997475243e-07	494.803076565744\\
						1.99999999495049e-06	486.559982770295\\
						2.99999999242573e-06	460.742323245732\\
						3.99999998990097e-06	432.521100586812\\
						4.99999998737621e-06	423.016137890729\\
						5.99999998485146e-06	410.16286238839\\
						6.9999999823267e-06	428.389469960257\\
					};
				\end{axis}
		\end{tikzpicture}}
		\subfloat[Tumor cell: $w$ for $t\in(0,\tmax)$.]{\label{w2} 
			\begin{tikzpicture}
				\begin{axis}[%
					width=\w,height=\h,
					scale only axis,
					xmin=0,xmax=7e-06,
					xlabel={$t$},
					ymin=600,ymax=800,
					ylabel={$\Vert w(\cdot,t)\rVert_{L^\infty(\Omega)}$},
					]
					\addplot [color=mycolor1, line width=2.0pt, forget plot]
					table[row sep=crcr]{%
						0	794.956217355833\\
						9.99999997475243e-07	787.814984536177\\
						1.99999999495049e-06	772.794091403289\\
						2.99999999242573e-06	729.946798026615\\
						3.99999998990097e-06	653.239798937889\\
						4.99999998737621e-06	629.588184533878\\
						5.99999998485146e-06	601.781701373807\\
						6.9999999823267e-06	664.503514499305\\
					};
				\end{axis}
		\end{tikzpicture}}
		\caption{Evolution of $\lVert u(\cdot,t)\rVert_{L^{\infty}(\Omega)},\lVert v(\cdot,t)\rVert_{L^{\infty}(\Omega)},\lVert w(\cdot,t)\rVert_{L^{\infty}(\Omega)}$ for $t\in(0,\tmax)$, $\tmax\approx8\times 10^{-6}$.}\label{fig.5.7.2}
	\end{figure}
	
	\subsection*{Example of blow-up in 3D}
	Let us take in model \eqref{1}, $\tau=1$, $\Omega=[-0.5,0.5]\times[-0.5,0.5]\times[-0.5,0.5]$, $\chi=2, \alpha=\beta=\gamma=\delta=\mu=1$.
	Additionally we consider this radially symmetric bell-shaped initial data
	\begin{equation}\label{initialdata}
		\begin{split}
			u_0(x,y,z) = u(x,y,z,0)=&1000e^{-1000(x^2+y^2+z^2)},\\
			v_0(x,y,z)=v(x,y,z,0)=&500e^{-500(x^2+y^2+z^2)},\\
			w_0(x,y,z)=w(x,y,z,0)=&800e^{-800(x^2+y^2+z^2)}.
		\end{split}
	\end{equation}
	
	The grid for the cube is uniform and the subdivision is constructed with $\Delta x=\Delta y= \Delta z=\frac{1}{101}$ whereas $\Delta t=10^{-6}$. 
	
	The approximation of the solution $(u,v,w)$ is displayed. Figure~\ref{fig.5.7.1} collects the evolution of $u,v,w$ at two instants of time; conversely Figure~\ref{fig.5.7.2} shows the behaviour in time of the maximum of such unknowns in the cube $\Omega$. Precisely in this last figure, one can see a certain uncontrolled increase of $u$ when $t$ approaches $\tmax\approx 8\times10^{-6}$. In particular $\lVert u(\cdot,t)\rVert_{L^{\infty}(\Omega)}$ close to $\tmax$ is approximately $6\times10^{6}$ (see Figure \ref{u2}). On the other hand, and exactly in line with the theoretical bounds \eqref{l2.2} and \eqref{l2.3}, Figures \ref{w2} and \ref{v2} show that with the choice \eqref{initialdata} such bounds are respected and remain below
	\begin{equation*}
		M_2 = \max\left\{1, \|\bw_0\|_{\lis}\right\} = 800
		\quad \text{and} \quad
		M_3 = \max \left\{\frac{\alpha}{\beta}M_2, \|\bv_0\|_{\lis}\right\} = 800, 
	\end{equation*}
	respectively.
	
	This graphical representation enhances our understanding of the system's evolution and provides valuable insights into the dynamics leading up to the blow-up phenomenon.

	\section*{Acknowledgments}
	
	The second and third authors are members of the Gruppo Nazionale per l’Analisi Matematica, la Probabilità e le loro Applicazioni (GNAMPA) of the Istituto Nazionale di Alta Matematica (INdAM) and are partially supported by the research project {\em Analysis of PDEs in connection with real phenomena}, CUP F73C22001130007, funded by
	\href{https://www.fondazionedisardegna.it/}{Fondazione di Sardegna}, annuity 2021.	
	The second author acknowledges financial support by INdAM-GNAMPA project {\em problemi non lineari di tipo stazionario ed evolutivo}, CUP E53C23001670001.  The third author acknowledges financial support by PNRR e.INS Ecosystem of Innovation for Next Generation Sardinia (CUP F53C22000430001, codice MUR ECS0000038).

\end{document}